\documentclass[12pt,reqno]{amsart}

\usepackage{amsmath}
\usepackage{amsthm}
\usepackage{cite}
\usepackage[all]{xy}
\usepackage{latexsym}
\usepackage{rotating}
\usepackage{amsfonts}
\usepackage{amssymb}
\usepackage{amsmath}
\usepackage{hyperref,url}

\newtheorem{theorem}{Theorem}[section]

\newtheorem{remark}[theorem]{Remark}

\newtheorem{example}[theorem]{Example}

\title[Rank 1 character varieties]{Rank 1 character varieties of finitely presented groups.}

\date{\today}

\author[C. Ashley]{Caleb Ashley}

\address{Department of Mathematics, University of Michigan, 530 Church Street Ann Arbor, MI 48109, USA}

\email{cjashley@umich.edu}

\author[JP. Burelle]{Jean-Philippe Burelle}

\address{Department of Mathematics, University of Maryland, 4176 Campus Drive, College Park, MD 20742, USA}

\email{jburelle@math.umd.edu}

\author[S. Lawton]{Sean Lawton}

\address{Department of Mathematical Sciences, George Mason University, 4400 University Drive, Fairfax, Virginia 22030, USA}

\email{slawton3@gmu.edu}

\newcommand{\C}{\mathbb{C}}
\newcommand{\Z}{\mathbb{Z}}

\newcommand{\R}{\mathbb{R}}
\newcommand{\SL}{\mathsf{SL}}
\newcommand{\PSL}{\mathsf{PSL}}

\newcommand{\hm}{\mathrm{Hom}}
\newcommand{\xb}{\mathbf{X}}
\newcommand{\yb}{\mathbf{Y}}
\newcommand{\zb}{\mathbf{Z}}
\newcommand{\wb}{\mathbf{W}}
\newcommand{\ub}{\mathbf{U}}
\newcommand{\vb}{\mathbf{V}}
\newcommand{\ab}{\mathbf{A}}
\newcommand{\bb}{\mathbf{B}}
\newcommand{\cb}{\mathbf{C}}
\newcommand{\db}{\mathbf{D}}
\newcommand{\tr}{\mathrm{tr}}
\newcommand{\X}{\mathfrak{X}}
\newcommand{\Y}{\mathfrak{Y}}
\newcommand{\id}{\mathbf{1}}
\newcommand{\quot}{/\!\!/}

\begin{document}

\begin{abstract}
Let $\X(\Gamma,G)$ be the $G$-character variety of $\Gamma$ where $G$ is a rank 1 complex affine algebraic group and $\Gamma$ is a finitely presentable discrete group. We describe an algorithm, which we implement in {\it Mathematica}, {\it SageMath}, and in Python, that takes a finite presentation for $\Gamma$ and produces a finite presentation of the coordinate ring of $\X(\Gamma,G)$.  We also provide a new description of the defining relations and local parameters of the coordinate ring when $\Gamma$ is free.  Although the theorems used to create the algorithm are not new, we hope that as a well-referenced exposition with a companion computer program it will be useful for computation and experimentation with these moduli spaces.
\end{abstract}

\dedicatory{To William Goldman on the occasion of his 60th birthday.}

\maketitle

\section{Introduction} 

Numerous applications motivate the study of families of homomorphisms from a discrete group to a Lie group.  Perhaps the most impactful (and so compelling) is the relationship in differential geometry to flat bundles (see \cite[Page 163]{thurston}).  Given a manifold $M$ with a base point $m\in M$ and a Lie group $G$, one can construct a pointed principal $G$-bundle over $M$ as follows.  Let $\tilde{M}$ be the universal cover of $M$ and take a homomorphism $\rho:\pi_1(M,m)\to G$.  The group $\Gamma:=\pi_1(M,m)$ acts on $\tilde{M}\times G$ by deck transformations in the first factor and by $\rho$ in the second.  Precisely, given $(x,g)\in \tilde{M}\times G$ and $\gamma \in \Gamma$ the action is $\gamma\cdot (x,g)=(\delta_\gamma(x),\rho(\gamma)g)$ where $\delta_\gamma$ is the corresponding deck transformation in the deck group $\Delta\cong \Gamma$.  This action is free since it is free on the first factor and so we obtain a principal $G$-bundle $E_\rho\to M$ by considering the quotient by $\Gamma$ of the projection $\tilde{M}\times G\to \tilde{M}$ since $\tilde{M}/\Gamma\cong M$.  The transition maps are locally constant and hence the bundle is flat.  Conversely, any flat principal $G$-bundle $P\to M$ has a holonomy homomorphism $\rho:\Gamma\to M$, once a base point is stipulated, so that $P\cong E_\rho$.  We see that the collection of flat principal $G$-bundles over $(M,m)$ is in bijective correspondence with the set $\mathrm{Hom}(\pi_1(M,m),G)$.  Changing base points corresponds exactly to conjugating homomorphisms by $G$.  Hence the conjugation quotient $\mathrm{Hom}(\pi_1(M,m),G)/G$ is bijectively equivalent to the collection of flat principal $G$-bundles over $M$ (without dependence on the base point).

Using homomorphisms to tie the geometry of a Lie group $G$ to the topology of a manifold $M$ via its fundamental group accounts for many, if not all, of the ways that spaces of homomorphisms appear in geometry, topology, and mathematical physics.  Examples occur in the study of knot invariants \cite{CCGLS}, hyperbolic geometry \cite{CS}, holomorphic vector bundles \cite{NS1}, Yang-Mills connections \cite{AB,  Hi}, Higgs bundles \cite{Simpson1, Simpson2}, flat principal bundles \cite{Borel-Friedman-Morgan}, the Geometric Langlands Program \cite{HaTh,KW}, and supersymmetry \cite{Kac-Smilga, Wi}.

Another important example where spaces of homomorphisms are prominent is the study of locally homogeneous structures on manifolds \cite{G6, GoldmanICM}.  Here the problem is, for a given topology on a compact manifold $M$ and a model geometry $X = G/H$ where $G$ is a Lie group, to classify all the possible ways of introducing the local geometry of $X$ onto the topology of $M$. The Ehresmann-Thurston Theorem shows that the moduli space of such marked $(G,X)$-structures is locally homeomorphic to the space of conjugacy classes of $\mathrm{Hom}(\pi_1(M,m),G)$ with the quotient topology.

Here is a simple but illustrative example of a space of flat bundles.  If $M$ is an annulus then $\Gamma\cong \Z$ and $\hm(\Gamma,G)\cong G$ by the evaluation mapping sending $1\mapsto g$ for any $g\in G$.  Say $G=\SL_2(\C)$.  Then each conjugacy class of a diagonalizable $g\in G$ is determined by its characteristic polynomial $\lambda^2-t\lambda+1$ where $t=\tr(g)$. There are only two classes of non-diagonalizable matrices: one with trace $2$ and the other with trace $-2$.  So the space of flat $\SL_2(\C)$-bundles over an annulus, with its closed points parametrized by $t$, is $\C\sqcup\{2_0,-2_0\}$ where $\pm 2_0$ are ``double-points" corresponding to the orbits of non-diagonalizable matrices with trace $\pm 2$.  In particular, the space is not $T_1$ since the closure of $\pm 2_0$ is $\{\pm 2_0, \pm 2\}$.

One way to deal with the topological complications illustrated above is to consider an approximation of the quotient space $\hm(\Gamma, G)/G$. Concretely, when $\Gamma$ is finitely presentable (which means there is a finite rank free group $F_r$ and an epimorphism $\varphi:F_r\to \Gamma$ with finitely generated kernel), we can consider: $$\Gamma\cong\langle \gamma_1,...,\gamma_r\ |\ R_1,...,R_s\rangle.$$  There is a map, depending on $\varphi$, $E_\varphi:\hm(\Gamma,G)\to G^r$ given by $E_\varphi(\rho)=(\rho(\gamma_1),...,\rho(\gamma_r))$.  This map, called an {\it evaluation map}, is clearly injective and has as its image the set $\{(g_1,...,g_r)\ |\ R_i(g_1,...,g_r)=\id,\ 1\leq i\leq s\}$ where $\id\in G$ is the identity.  As $G$ is a Lie group the image inherits the subspace topology from $G^r$, and is consequently an analytic subvariety (algebraic if $G$ is algebraic).  One can show that $E_\varphi$ is an embedding when $\hm(\Gamma, G)$ is given the more intrinsic compact-open topology with respect to the discrete topology on $\Gamma$.  This observation shows that any two presentations for $\Gamma$ result in homeomorphic topologies on $\hm(\Gamma, G)$ as analytic varieties.  

Now consider the subspace of {\it polystable} points:  $\hm(\Gamma, G)^*=\{\rho\in \hm(\Gamma, G)\ |\ G\rho=\overline{G\rho}\}$, where $G\rho$ denotes the conjugation orbit of $\rho$ and $\overline{G\rho}$ is the closure of the orbit in the aforementioned topology on $\hm(\Gamma, G)$.  Then the orbit space $\X(\Gamma,G)=\hm(\Gamma, G)^*/G$ is $T_1$, and called the {\it moduli space of $G$-representations of $\Gamma$}.  

In the above example, $\X(\Z,\SL_2(\C))=\C$ since $\pm 2_0$ correspond to non-closed orbits.  

Here are some facts about $\X(\Gamma,G)$.  When $G$ is real reductive then $\X(\Gamma,G)$ is Hausdorff and when $G$ is also real algebraic $\X(\Gamma,G)$ is moreover semi-algebraic (see the discussion in \cite{CFLO1,CFLO2}).  In the special case when $G$ is compact, then $\X(\Gamma,G)=\hm(\Gamma, G)/G$.  In the special case when $G$ is complex affine algebraic and reductive, then $\X(\Gamma,G)$ is homeomorphic to the Geometric Invariant Theory (GIT) quotient $\hm(\Gamma, G)\quot G$ with the Euclidean topology (see \cite[Proposition 2.3]{FlLa4}) and so is naturally identified with the $\C$-points of an affine algebraic set.  Moreover, again when $G$ is reductive and complex algebraic $\X(\Gamma,G)$ is {\it homotopic} to the potentially non-Hausdorff quotient $\hm(\Gamma,G)/G$ (see \cite[Proposition 3.4]{FLR}).  With these cases in mind, we consider $\X(\Gamma,G)$ a reasonable approximation to $\hm(\Gamma, G)/G$ in general.

We say a few more words about the case when $G$ is complex reductive.  In this case, the GIT quotient is $\mathrm{Spec}(\C[\hm(\Gamma, G)]^G)$ where $\C[\hm(\Gamma, G)]$ is the coordinate ring of the affine algebraic set $\hm(\Gamma,G)$ and $\C[\hm(\Gamma, G)]^G$ is the ring of invariant elements in the coordinate ring under the conjugation action $(g\cdot f)(x)=f(g^{-1}xg)$.  

In the case $G=\SL_n(\C)$ the coordinate ring of $\X(\Gamma,G)$ is generated by functions of the form $\tr(\wb)$ where $\wb$ is a word in generic unimodular matrices (we will say more about this in Section \ref{gen}).  This explains why the moduli space of $G$-representations is called ``character variety,'' as it is a variety of characters.  However, as discussed in \cite{LaSi}, for some $G$ this later fact remains true and for others it does not.  On the other hand, the main theorem in \cite{LaSi} says that a finite quotient of the stable locus of $\X(\Gamma,G)$ is always in birationally bijective correspondence to a variety of characters, in particular, they have the same Grothendieck class. 

In this paper we assume $G=\SL_2(\C)$ and $\Gamma$ is any finitely presented group.  We provide an {\it effective} algorithm (that does not rely on elimination ideal methods) to determine the structure of the variety $\X(\Gamma,G)$, and provide programs written in {\it Mathematica}, {\it SageMath}, and Python that implement our algorithm.\footnote{Available at \url{http://math.gmu.edu/~slawton3/trace-identities.nb}, \url{http://math.gmu.edu/~slawton3/Main.sagews}, and \url{http://math.gmu.edu/~slawton3/charvars.py} respectively.}  As far as we know this is the first time this algorithm has been put together in this generality.  The theorems used to construct this algorithm are not new however.  They are an amalgamation of results from Vogt \cite{Vogt}, Horowitz \cite{Horowitz}, Magnus \cite{Magnus}, Gonz{\'a}lez-Acu{\~n}a and Montesinos-Amilibia \cite{GoMo}, Brumfiel and Hilden \cite{BrumHild}, and Drensky \cite{D}.  We put them together to provide a new proof and description of the structure of $\SL_2$-character varieties.  In Section 2 we describe a generating set and local parameters of $\X(\Gamma,G)$, and in Section 3 we describe a generating set for the ideal of defining relations in the aforementioned generators.  In Section 4 we implement our algorithm to compute concrete examples, verifying known results.

\section*{Acknowledgements}
Lawton thanks Chris Manon and Bill Goldman for helpful conversations.  This material is based upon work supported by the National Science Foundation (NSF) under grant number DMS 1321794; the Mathematics Research Communities (MRC) program.  All three authors benefited from our time in Snowbird with the MRC (2011 and 2016) and we are very grateful for the wonderful support they provided.  In particular, special thanks goes to Christine Stevens.  Additionally, Lawton was supported by the Simons Foundation Collaboration grant 245642, and the NSF grant DMS 1309376.  Lastly, we acknowledge support from NSF grants DMS 1107452, 1107263, 1107367 ``RNMS: GEometric structures And Representation varieties" (the GEAR Network).  We thank an anonymous referee for helping improve the paper.

\section{Generators of the Character Variety}\label{gen}

Let $G$ be a reductive group scheme defined over $\Z$.  Then it represents a functor from algebras to groups.  For a finitely presentable group $\Gamma$ there is a natural functor from algebras to sets $\mathcal{R}(\Gamma, G)$ given by $A\mapsto \hm(\Gamma, G(A))$.  As shown in \cite{LubMag} this functor represents a scheme which we denote the same way.  Then by GIT (see \cite{MFK, Se77}) we have an affine scheme over $\Z$ given by $\mathcal{X}(\Gamma, G)=\mathcal{R}(\Gamma, G)\quot G$ whose underlying variety is $\X(\Gamma, G)$.  In other words, $\mathcal{X}(\Gamma, G)/\sqrt{0}\cong \mathfrak{X}(\Gamma, G)$.  By \cite{KaMi, KaMi2} there are in fact non-zero nilpotents in these schemes for some choices of $\Gamma$ and $G$.   See \cite{Si2, Si4} for an alternative description of the scheme structure of $\mathcal{X}(\Gamma,G)$.

When $\Gamma=F_r$ is a rank $r$ free group, then $\hm(\Gamma,G)\cong G^r$ and naturally has no nilpotents.  When $G$ is moreover connected it is also irreducible.  Therefore when $G$ is connected and $\Gamma$ is free $\mathcal{X}(\Gamma,G)$ is reduced and irreducible. By \cite[Proposition 2.6]{Ha} the functor from schemes to varieties is faithful and the scheme structure of $\mathcal{X}(F_r,G)$ is determined by the structure of the variety $\X(F_r,G)$.  In the coming sections we will explicitly write out the structure of the scheme $\mathcal{X}(F_r,\SL_2)$ over $\Z[1/2]$, and the structure of the variety $\X(\Gamma, \SL_2)$ for any finitely presented $\Gamma$.

First we describe a natural relationship between non-commutative algebra and $\SL_2(\C)$-character varieties that was more generally described in \cite{La0} for $\SL_n$. We focus on the $n=2$ case as it is the topic of this treatise.

Consider the polynomial ring $\mathcal{P}_r=\C[x^k_{ij}\ |\ 1\leq i,j\leq 2,\ 1\leq k\leq r]$ in $4r$ indeterminates, and in those terms define {\it generic matrices} as $\xb_k=\left(
\begin{array}{cc}
x^k_{11} & x^k_{12} \\
x^k_{21} & x^k_{22}  \\
\end{array}\right),$ for $1\leq k\leq r$.  

Denote the $\C$-algebra on $\xb_1,...,\xb_r$ with addition given by matrix addition and multiplication given by matrix multiplication by $\mathcal{M}_r=\C[\xb_1,...,\xb_r]$.  Elements in $\mathcal{M}_r$ have the form $\sum_{i\in I}\lambda_i \wb_i$ where $\wb_i$ is a finite product of elements in the letters $\xb_1,...,\xb_r$.  Such a product is called a {\it word}; denote the set of all such words $\mathcal{W}$. There is a natural map $\tr:\mathcal{M}_r\to \mathcal{P}_r$ given by $\sum_{i\in I}\lambda_i \wb_i\mapsto  \sum_{i\in I}\lambda_i \tr(\wb_i).$
Now consider the monoid of $2\times 2$ complex matrices $\mathrm{M}_2(\C)$ and the affine space $\mathrm{M}_2(\C)^r\cong \C^{4r}$.  Then $G=\SL_2(\C)$ acts on $\C^{4r}$ by $g\cdot (A_1,...,A_r)=(gA_1g^{-1},...,gA_rg^{-1})$.  With respect to this action, denote the GIT quotient $\mathfrak{Y}_r=\C^{4r}\quot G$.  By \cite{P1,Ra} the image of $\tr$ generates the coordinate ring of $\mathfrak{Y}_r$, that is, $\langle\tr(\mathcal{M}_r)\rangle=\C[\mathfrak{Y}_r]$.  In particular, this shows that $\C[\mathfrak{Y}_r]=\C[\mathrm{M}_2(\C)^r]^G=\C[\tr(\wb)\ |\ \wb \in \mathcal{W}]$.  By Nagata's solution to Hilbert's 14-th problem \cite{nagata}, we know this algebra is finitely generated since $G$ is reductive.  We refer to the elements $\tr(\wb)$ as {\it trace coordinates}.

We now relate this algebra to $\C[\X(F_r,G)]$. For $\X(F_r,\SL_2(\C))$ we write $\X_r$ to simplify the notation.  Note that $\C[\hm(F_r,\SL_2(\C))]=\mathcal{P}_r/\Delta_r$ where $\Delta_r$ is the ideal generated by 
the $r$ irreducible polynomials $\det(\xb_k)-1=x^{k}_{11}x^{k}_{22}-x^{k}_{12}x^{k}_{21}-1.$

Then $\C[\mathfrak{Y}_r]/\Delta\approx \C[\X_r]$. Otherwise stated, $$\mathcal{P}_r^{\SL_2(\C)}/\Delta\approx \left( \mathcal{P}_r/\Delta \right)^{\SL_2(\C)},$$ which is true because $\SL_2(\C)$ is {\it linearly} reductive and the generators of $\Delta$ are conjugation invariant (see \cite[Lemma 3.5]{Do}).  Let $\overline{x}^k_{ij}$ be the image of $x^{k}_{ij}$ under $\mathcal{P}_r \to \mathcal{P}_r/\Delta$, and define $\overline{\xb_k}=\left(
\begin{array}{cc}
\overline{x}^k_{11} & \overline{x}^k_{12} \\
\overline{x}^k_{21} & \overline{x}^k_{22}  \\
\end{array}\right).$ They are called {\it generic unimodular matrices}.  Let $\overline{\mathcal{W}}$ be the collection of all words in the letters $\overline{\xb_1},...,\overline{\xb_r}$.   The discussion above shows $$\C[\X_r]=\C[\hm(F_r,\SL_2(\C))]^{\SL_2(\C)}=\C[\tr(\wb)\ |\ \wb \in \overline{\mathcal{W}}].$$

\subsection{Free Groups}\label{sec:freegroups}
In this subsection, we start with the infinite generating set of trace coordinates for $\C[\X_r]$ and reduce it to a finite set.  We then show it is {\it unshortenable}, that is, the finite subset of generators is a minimal generating set of trace coordinates. Lastly we note that this unshortenable generating set is in fact minimal among all generating sets for the ring $\C[\X_r]$.  Thereafter, we pick out a maximal algebraically independent subset of these generators which give local parameters.

Indeed, working in $\mathcal{M}_r$, the Cayley-Hamilton (characteristic) equation gives $$\xb^2-\tr(\xb)\xb+\mathrm{det}(\xb)\id=\mathbf{0}.$$  And if we assume $\mathrm{det}(\xb)=1$, as is the case in $\C[\X_r]$, we easily derive $\tr(\xb^{-1})=\tr(\xb)$ and $\tr(\xb^2)=\tr(\xb)^2-2$.  Hence the generators $\tr(\xb^2)$ in $\C[\Y_r]$ project to $\tr(\xb)^2-2$ in $\C[\X_r]$ and so are not needed to generate the ring. This is essentially the only difference between $\C[\Y_r]$ and $\C[\X_r]$.

Minimal generators for $\C[\Y_r]$ were first worked out by Sibirskii in $1968$ \cite{Si68}; see also \cite{LeBruyn,T2}.  Those generators project to an unshortenable generating set for $\C[\X_r]$ (see \cite[Proposition 21]{La3}), and are in fact minimal among all generating sets by \cite[Proposition 3.3]{GL}.  

From \cite{Vogt} we have:
\begin{eqnarray}\nonumber
\tr(\xb\yb\zb\wb)&=&(1/2)(\tr(\xb)\tr(\yb)\tr(\zb)\tr(\wb)+\tr(\xb)\tr(\yb\zb\wb)\\ \nonumber
&+&\tr(\yb)\tr(\xb\zb\wb)+\tr(\zb)\tr(\xb\yb\wb)+\tr(\wb)\tr(\xb\yb\zb)\\ \nonumber
&-&\tr(\xb\zb)\tr(\yb\wb)+\tr(\xb\wb)\tr(\yb\zb)+\tr(\xb\yb)\tr(\zb\wb)\\ \nonumber
&-&\tr(\xb)\tr(\yb)\tr(\zb\wb)-\tr(\xb)\tr(\wb)\tr(\yb\zb)\\ 
&-&\tr(\yb)\tr(\zb)\tr(\xb\wb)-\tr(\zb)\tr(\wb)\tr(\xb\yb))\label{eqn:4relation}
\end{eqnarray}  

Therefore, words of length 4 or longer are not needed to generate the ring $\C[\X_r]$.    

Let $\mathcal{G}(r)$ be the union of the sets
$\mathcal{G}_1=\{\tr(\xb_1),...,\tr(\xb_r)\}$ of cardinality $r$,
$\mathcal{G}_2=\{\tr(\xb_i\xb_j)\ |\ 1\leq i,j \leq r\ \text{and}\ i\not=j\}$ of cardinality $\frac{r(r-1)}{2}$, and 
$\mathcal{G}_3=\{\tr(\xb_i\xb_j\xb_k)\ | \ 1\leq i<j<k \leq r \}$ of cardinality $\frac{r(r-1)(r-2)}{3}$.  
  
Using Equation \eqref{eqn:4relation} one concludes:  
  
\begin{theorem}[\cite{Vogt},  \cite{PrSi}]
$\mathcal{G}(r)$ is a minimal generating set for $\C[\X_r]$.
\end{theorem}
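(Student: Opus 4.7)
The plan is to split the statement into two parts: generation (which I would prove directly) and minimality (which I would invoke from the cited references).

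For generation, I would induct on the word length $\ell(\wb)$ to show every $\tr(\wb)$ with $\wb\in\overline{\mathcal{W}}$ lies in the $\C$-subalgebra generated by $\mathcal{G}(r)$. The case $\ell(\wb)=1$ is trivial. For $\ell(\wb)=2$ with $\wb=\xb_i\xb_j$, cyclic invariance of trace lets me assume $i\leq j$; the case $i=j$ reduces by Cayley-Hamilton since $\tr(\xb_i^2)=\tr(\xb_i)^2-2$.

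The length-$3$ case is the most delicate. Given $\wb=\xb_i\xb_j\xb_k$, if an index repeats (say $i=j$), I would multiply the Cayley-Hamilton relation $\xb_i^2=\tr(\xb_i)\xb_i-\id$ on the right by $\xb_k$ and take the trace to reduce $\tr(\wb)$ to traces of words of length at most $2$. If $i,j,k$ are all distinct, I would apply the polarized Cayley-Hamilton identity
\[
\tr(\xb\yb\zb)+\tr(\xb\zb\yb)=\tr(\xb)\tr(\yb\zb)+\tr(\yb)\tr(\xb\zb)+\tr(\zb)\tr(\xb\yb)-\tr(\xb)\tr(\yb)\tr(\zb)
\]
together with cyclic invariance of trace to express $\tr(\wb)$ as a polynomial in elements of $\mathcal{G}_1\cup\mathcal{G}_2\cup\mathcal{G}_3$. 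For $\ell(\wb)\geq 4$, setting $n=\ell(\wb)$, I would factor $\wb=\xb_{i_1}\cdot\xb_{i_2}\cdot\xb_{i_3}\cdot(\xb_{i_4}\cdots\xb_{i_n})$ and apply equation~\eqref{eqn:4relation}; every trace on the right-hand side involves at most three of these four factors, so it comes from a word of length at most $n-1$, and induction concludes.

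The main obstacle is minimality. The relations above produce many syzygies between trace coordinates, and it is not a priori clear that none of them eliminates an element of $\mathcal{G}(r)$; the cubic generators in $\mathcal{G}_3$ in particular sit in a tight web of Vogt-type identities. This is precisely what \cite[Proposition 21]{La3} (unshortenability of the trace-coordinate generators) and \cite[Proposition 3.3]{GL} (minimality among all $\C$-algebra generating sets) establish, by degree and multigrading arguments building on Sibirskii's description of $\C[\Y_r]$, so I would invoke those results rather than reprove them.
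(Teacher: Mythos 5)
Your proposal is correct and follows essentially the same route as the paper: reduce traces of long words by induction using the Cayley--Hamilton consequences and the length-four identity \eqref{eqn:4relation}, then quote \cite[Proposition 21]{La3} and \cite[Proposition 3.3]{GL} for unshortenability and minimality. The only difference is one of explicitness --- you spell out the length-three step via the polarized Cayley--Hamilton identity $\tr(\xb\zb\yb)=\tr(\xb)\tr(\yb\zb)+\tr(\yb)\tr(\xb\zb)+\tr(\zb)\tr(\xb\yb)-\tr(\xb)\tr(\yb)\tr(\zb)-\tr(\xb\yb\zb)$, which the paper leaves implicit but which is genuinely needed since $\mathcal{G}_3$ contains only one of the two cyclic classes for each triple $i<j<k$.
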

  
Since the dimension of $\X_r$ is $3r-3$ for $r\geq 2$ and 1 if $r=1$ we conclude that $\X_1=\C$ and $\X_2=\C^3$.  
   
In \cite{G9} it is shown that $\X_3$ is the spectrum of the following ring:  $$\C[t_1,t_2,t_3,t_{12},t_{13},t_{23},t_{123}]/\langle t_{123}^2-Pt_{123}+Q\rangle$$ where $P=t_1t_{23}+t_2t_{13}+t_3t_{12}-t_1t_2t_3$ and $$Q=t_1^2+t_2^2+t_3^2 +t_{12}^2+t_{23}^2+t_{13}^2-t_1t_2t_{12}-t_2t_3t_{23}-t_1t_3t_{13}+t_{12}t_{23}t_{13}-4.$$

The isomorphism is determined by the surjection $$\C[t_1,t_2,t_3,t_{12},t_{13},t_{23},t_{123}]\to \C[\X_3]$$ given by $t_1\mapsto \tr(\xb)$, $t_2\mapsto \tr(\yb)$, $t_3\mapsto \tr(\zb)$, $t_{12}\mapsto \tr(\xb\yb)$, $t_{13}\mapsto \tr(\xb\zb)$, $t_{23}\mapsto \tr(\yb\zb)$, and $t_{123}\mapsto \tr(\xb\yb\zb)$.

Although the moduli space $\X_r$ embeds in $\C^{N}$ where $N\geq\frac{r(r^2+5)}{6}$, its dimension is only $3r-3$ for $r\geq 2$.   So having described a minimal global coordinate system on $\X_r$, we now turn our attention to local coordinates; that is, to maximal algebraically independent subsets of $\mathcal{G}(r)=\mathcal{G}_1\cup\mathcal{G}_2\cup\mathcal{G}_3$.

\begin{theorem}
The following subsets of $\mathcal{G}(r)$ are together algebraically independent:
$\{\tr(\xb_i)\ | \ 1\leq i \leq r\}$, $\{\tr(\xb_1\xb_i)\ |\ 2\leq i\leq r\}$, and $\{\tr(\xb_2\xb_i)\ |\ 3\leq i\leq r\}$. There are $r+(r-1)+(r-2)=3r-3$ of these generators, and so the union of these three sets forms a maximal set of algebraically independent generators.
\end{theorem}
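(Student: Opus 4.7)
The plan is to reduce algebraic independence of these $3r-3$ $\SL_2(\C)$-invariant regular functions to dominance of the induced morphism $F\colon \X_r \to \C^{3r-3}$. Since $r+(r-1)+(r-2) = 3r-3 = \dim\X_r$ and $\X_r$ is irreducible (as the GIT quotient of the irreducible variety $\SL_2(\C)^r$), the two properties are equivalent. I will prove dominance by lifting $F$ to $\tilde F\colon \SL_2(\C)^r \to \C^{3r-3}$ and exhibiting a point $(A_1,\dots,A_r)\in\SL_2(\C)^r$ where the Jacobian $J$ of $\tilde F$ has rank $3r-3$.

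The key sparsity observation is that, for $k \ge 3$, the three functions $\tr(\xb_k),\tr(\xb_1\xb_k),\tr(\xb_2\xb_k)$ depend only on $\xb_1,\xb_2,\xb_k$. Ordering the output coordinates into blocks---the $r=2$ block $(\tr(\xb_1),\tr(\xb_2),\tr(\xb_1\xb_2))$ followed by the blocks $(\tr(\xb_k),\tr(\xb_1\xb_k),\tr(\xb_2\xb_k))$ for $k=3,\dots,r$---a perturbation of $\xb_k$ for $k\ge 3$ alters only the $k$th output block. Hence the rank of $J$ is $3 + 3(r-2) = 3r-3$ as soon as (a) the $r=2$ block has rank $3$ at $(A_1,A_2)$, and (b) for each $k\ge 3$ the $3\times 4$ block $J^{(k)}$---the differential of $X\mapsto(\tr X,\tr(A_1 X),\tr(A_2 X))$ at $X=A_k$, restricted to $T_{A_k}\SL_2(\C)$---has rank $3$.

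Condition (a) holds on a dense open subset of $\SL_2(\C)^2$ because the isomorphism $\X_2 \cong \C^3$ via $\tr(\xb_1),\tr(\xb_2),\tr(\xb_1\xb_2)$ implies that the quotient map $\SL_2(\C)^2 \to \C^3$ is a submersion over the stable locus. Condition (b)---the main technical step---is pure linear algebra. Under the nondegenerate trace pairing $\langle B,X\rangle = \tr(BX)$ on $M_2(\C)$, the three functionals $X\mapsto \tr(BX)$ with $B\in\{I,A_1,A_2\}$ correspond to the vectors $I,A_1,A_2$. By Jacobi's formula, $d(\det)_{A_k}(Z) = \tr(A_k^{-1}Z)$, so $T_{A_k}\SL_2(\C)$ is the trace-pairing annihilator of $\C\cdot A_k^{-1}$. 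The three functionals restrict to linearly independent functionals on $T_{A_k}\SL_2(\C)$ if and only if $\{I,A_1,A_2,A_k^{-1}\}$ spans $M_2(\C)$---a Zariski open condition on $(A_1,A_2,A_k)$.

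Choose $(A_1,\dots,A_r)$ in the nonempty intersection of the open set from (a) with the $r-2$ open sets from (b). Then $\tilde F$ is a submersion at that point; since $\tilde F$ factors through $F$ (the conjugation orbit directions lying in $\ker d\tilde F$ by invariance of trace), $F$ is dominant, and our $3r-3$ functions are algebraically independent. The hard part is the linear-algebra verification in (b); everything else is dimension counting and sparsity bookkeeping.
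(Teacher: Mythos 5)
Your proof is correct, and its core is genuinely different from the paper's. Both arguments use the Jacobian criterion together with the same block decomposition (the paper phrases the sparsity bookkeeping as an induction on $r$, reducing to the independence of $\tr(\xb_r),\tr(\xb_1\xb_r),\tr(\xb_2\xb_r)$ in the entries of $\xb_r$; your block-lower-triangular Jacobian is the same reduction). The difference is in how the $3\times 3$ diagonal blocks are shown to be nonsingular. The paper normalizes $\xb_1$ to be diagonal and $\xb_2$ to have a $1$ in the $(2,1)$ entry, eliminates $x^k_{21}$ via the determinant condition, and then verifies nonvanishing of the resulting subdeterminant by evaluating it in \emph{Mathematica} at random unimodular matrices. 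You replace this computation with a conceptual linear-algebra criterion: via the trace pairing and Jacobi's formula, the functionals $Z\mapsto\tr(Z),\tr(A_1Z),\tr(A_2Z)$ are independent on $T_{A_k}\SL_2(\C)$ if and only if $\{\id,A_1,A_2,A_k^{-1}\}$ spans $M_2(\C)$. This buys a computer-free proof with an explicit, checkable open condition, at the cost of needing to verify that this open condition is nonempty --- the one loose end in your write-up. You assert nonemptiness of the intersection but never exhibit a witness; you should add one line, e.g.\ $A_1=\mathrm{diag}(2,1/2)$, $A_2=\left(\begin{smallmatrix}1&1\\0&1\end{smallmatrix}\right)$, $A_k=\left(\begin{smallmatrix}1&0\\1&1\end{smallmatrix}\right)$ makes $\{\id,A_1,A_2,A_k^{-1}\}$ a basis of $M_2(\C)$, and the condition in (a) holds generically since the map $\SL_2(\C)^2\to\C^3$ is dominant and we are in characteristic zero. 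With that line added your argument is complete and, arguably, more satisfying than the original since it removes the reliance on a symbolic evaluation at random matrices.
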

  
\begin{proof}
This is an adaptation of the proof Aslaksen, Tan, and Zhu used in 1994 to establish a similar result for $\C[\Y_r]$, see \cite{ATZ}.  Teranishi in 1988 gave a different proof in \cite{T3}.  

We prove this by induction.  For $r=1,2,3$ this has already been established.  
For $r\geq 4$ we calculate the Jacobian matrix of these $3r-3$ functions in $3r-3$ independent variables:
from $\xb_1$ we take $x_{11}^1$, from $\xb_2$ we take $x^2_{11},x^2_{22}$, and from $\xb_k$ we take $x^k_{11},x^k_{12},x^k_{22}.$  Putting $\tr(\xb_r),\tr(\xb_1\xb_r), \tr(\xb_2\xb_r)$ in the last $3$ rows of the Jacobian we get a block diagonal matrix. By induction we must show these three traces are independent in the variables from $\xb_r$. We first show that the matrix entries we have chosen are independent:  
  
Generically, we can assume the first matrix is diagonal, so $\xb_1=\left(
 \begin{array}{cc}
  x_{11}^1& 0 \\
  0& 1/x^1_{11}  \\
  \end{array}\right).$ Then conjugating by $\left(
 \begin{array}{cc}
 1/\sqrt{x^2_{21}}& 0 \\
 0& \sqrt{x^2_{21}}  \\
 \end{array}\right)$ allows us to assume that $\xb_2=\left(
 \begin{array}{cc}
  x^2_{11}& x^2_{11}x^2_{22}-1 \\
  1& x^2_{22}  \\
  \end{array}\right).$
   
Lastly, using $\mathrm{det}(\xb_k)=1$ allows us to solve for $x^k_{21}$ in the other generic matrices.  Since this leaves us with only $3r-3$ elements, they must be independent since the dimension is $3r-3$ also.
  
Using {\it Mathematica}, we calculate this subdeterminant of the Jacobian and evaluate at random unimodular matrices; finding it non-zero.  If there was a relation the determinant would be identically zero and so any non-zero evaluation shows independence.
\end{proof}

\begin{remark}
It is natural to ask how much can be recovered from these parameters.  In the cases, $r=1,2,3$ the natural map $\X_r\to \C^{3r-3}$ is surjective and there is always a slice $($a map back that commutes$)$.   Unfortunately, this is not always the case.
By {\it \cite{Fl},} $\X_r\longrightarrow \C^{3r-3}$ is only surjective in the cases $r=1,2,3$; but in general the image omits only a subset of a codimension 1 subspace.
\end{remark}

\begin{remark}
Minimal generators and maximal algebraically independent subsets of those generators for the coordinate ring of $\X(F_r,\SL_3(\C))$ were described in \cite{La0,La1,La3,La4}, and for $\X(F_2,\SL_4(\C))$ in \cite{GL}.  The problem is open for $\X(F_r,\SL_4(\C))$ when $r\geq 3$ and $\X(F_r,\SL_n(\C))$ for $n\geq 5$ and $r\geq 2$. 
\end{remark}

\subsection{Finitely Presented Groups}

For any complex reductive affine algebraic group $G$, if $\Gamma$ is a group on $r$ generators, then the epimorphism $F_r\to \Gamma$ mapping the free generators of $F_r$ to the generators of $\Gamma$ gives an embedding $\hm(\Gamma, G)\hookrightarrow \hm(F_r,G)$.  This in turn gives a ring epimorphism on coordinate rings $\C[\hm(F_r,G)]\to \C[\hm(\Gamma, G)]$, and also on invariant rings $\C[\hm(F_r,G)]^G\to \C[\hm(\Gamma, G)]^G$.  We thus obtain an embedding $\X(\Gamma,G)\hookrightarrow \X(F_r,G)$ depending on the original epimorphism $F_r\to \Gamma$. Therefore, any generating set for $\C[\X(F_r,G)]$ gives a generating set for $\C[\X(\Gamma, G)]$.

From this observation, and the results of the previous subsection, we immediately conclude the following theorem.

\begin{theorem}
Let $\Gamma$ be a finitely generated group. Then there exists $r$ so that $\C[\X(\Gamma, \SL_2(\C))]$ is generated by $\mathcal{G}(r)$.  
\end{theorem}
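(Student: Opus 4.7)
The plan is to use the functoriality of the character variety construction with respect to group epimorphisms, together with the explicit description of $\mathcal{G}(r)$ as a generating set for $\C[\X_r]$ established in Theorem~2.2. Essentially, everything needed for the argument has already been laid out in the paragraph immediately preceding the statement; the proof reduces to combining two observations.

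First, I would begin by fixing a finite generating set $\gamma_1,\dots,\gamma_r$ for $\Gamma$ and invoking the universal property of the free group to obtain an epimorphism $\varphi\colon F_r\twoheadrightarrow\Gamma$ sending the free generators to the $\gamma_i$. Precomposition with $\varphi$ yields an inclusion $\hm(\Gamma,G)\hookrightarrow\hm(F_r,G)$, which is a closed embedding of affine schemes because its image is cut out by the word equations $R_j(\rho(\gamma_1),\dots,\rho(\gamma_r))=\id$ corresponding to any (possibly infinite) set of relations in $\ker\varphi$. Dually, this produces a surjection of coordinate rings $\C[\hm(F_r,G)]\twoheadrightarrow\C[\hm(\Gamma,G)]$ that is equivariant for the diagonal conjugation action of $G=\SL_2(\C)$.

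Next I would pass to $G$-invariants. Since $\SL_2(\C)$ is linearly reductive, the functor of invariants is exact, so the surjection on coordinate rings descends to a surjection $\C[\X_r]=\C[\hm(F_r,G)]^{G}\twoheadrightarrow\C[\hm(\Gamma,G)]^{G}=\C[\X(\Gamma,G)]$. Any generating set of the domain therefore descends to a generating set of the codomain. Applying Theorem~2.2, which exhibits $\mathcal{G}(r)=\mathcal{G}_1\cup\mathcal{G}_2\cup\mathcal{G}_3$ as a (minimal) generating set for $\C[\X_r]$, we conclude that the images of the elements of $\mathcal{G}(r)$ under the surjection generate $\C[\X(\Gamma,\SL_2(\C))]$. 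Concretely, this amounts to saying that every conjugation-invariant regular function on $\hm(\Gamma,\SL_2(\C))$ is a polynomial in the traces $\tr(\rho(\gamma_i))$, $\tr(\rho(\gamma_i\gamma_j))$ for $i<j$, and $\tr(\rho(\gamma_i\gamma_j\gamma_k))$ for $i<j<k$.

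There is no real obstacle here: the only point that warrants a moment's care is the use of linear reductivity to ensure that invariants behave well under the surjection of coordinate rings, so that $\C[\X_r]\to\C[\X(\Gamma,G)]$ is genuinely surjective (not merely that its image contains all invariants pulled back from $F_r$). Once that is acknowledged, the theorem is a direct corollary of Theorem~2.2 and the functorial embedding $\X(\Gamma,G)\hookrightarrow\X(F_r,G)$. It is worth remarking that the value of $r$ is not canonical—it depends on the choice of generating set for $\Gamma$—and that, while $\mathcal{G}(r)$ continues to generate, it will generally no longer be a minimal generating set for $\C[\X(\Gamma,\SL_2(\C))]$, since relations in $\Gamma$ can introduce polynomial dependencies among the trace coordinates.
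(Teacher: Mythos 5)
Your argument is correct and is essentially identical to the paper's: the paper likewise obtains the closed embedding $\hm(\Gamma,G)\hookrightarrow\hm(F_r,G)$ from an epimorphism $F_r\to\Gamma$, passes to the induced surjections $\C[\hm(F_r,G)]\to\C[\hm(\Gamma,G)]$ and $\C[\hm(F_r,G)]^G\to\C[\hm(\Gamma,G)]^G$, and then applies the generating result for $\C[\X_r]$. Your explicit appeal to linear reductivity to justify surjectivity on invariant rings is the same point the paper relies on (it invokes it earlier in Section \ref{gen}), so there is no substantive difference.
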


As shown in \cite{LaSi}, although there exist complex reductive groups $G$ where trace functions do not generate $\C[\X(F_r,G)]$, the regular functions on
such character varieties are always rational functions in characters.  

Even when $G=\SL_2(\C)$, when $\Gamma$ is not free (like when it is free abelian of rank $r$, for example) the generating set $\mathcal{G}(r)$ need {\it not} be minimal (or even unshortenable); see 
\cite{SiGenerating}.  However, standard Gr\"obner basis elimination algorithms can reduce the collection $\mathcal{G}(r)$ to an unshortenable generating set given the defining relations of $\X(\Gamma, G)$. In Section \ref{section-relations} we determine explicitly the defining relations of $\X(\Gamma, G)$ that cut out the variety as a set for any $\Gamma$ (when $\Gamma$ is a free group these relations cut out the variety as a scheme).

\subsection{Rank 1 Groups}

Every simple rank 1 complex algebraic group $G$ is isomorphic to $\SL_2$ or $\PSL_2$ since the simply connected group with Lie algebra $\mathfrak{sl}_2$ is unique and clearly $\SL_2$ and all others are central quotients thereof.  As mentioned in the previous subsection, to determine a generating set for $\C[\X(\Gamma, G)]$ for any finitely generated $\Gamma$ it suffices to find a generating set for $\C[\X(F_r,G)]$ where $F_r\to \Gamma$ is a surjection.

Now, $\PSL_2\cong \SL_2/Z$ where $Z=\{\pm \id\}$ is the center.  Since the $G$-conjugation action on $\hm(F_r, G)$ commutes with the left multiplication action of $Z^r$ on $\hm(F_r,G)\cong G^r$ given by $(z_1,...,z_r)\cdot (g_1,...,g_r)=(z_1g_1,...,z_rg_r)$, we conclude that $\X(F_r,\PSL_2(\C))\cong\X(F_r,\SL_2(\C))\quot Z^r$.  

Sikora describes in \cite[Section 3]{SiFinite} how to start from a generating set for $\C[\X(F_r,\SL_2(\C))]$ and determine one for $$\C[\X(F_r,\SL_2(\C))]^{Z^r}=\C[\X(F_r,\PSL_2(\C))].$$  We summarize here.

Let $F_r=\langle \gamma_1,...,\gamma_r\rangle.$ The abelianization map $$C:F_r\to F_r/[F_r,F_r]\cong \Z^r$$ allows one to define the degree of a word $\gamma_i$ in $\gamma$ since $C(\gamma)=\gamma_1^{d_1}\cdots\gamma_r^{d_r}$; define $\deg_{i}(\gamma)=d_i$.  Now let $v:F_r\to(\Z/2\Z)^r$ be defined by $v(\gamma)=(\deg_1(\gamma)\mod 2,...,\deg_r(\gamma)\mod 2)$. Let $\varphi:\X(F_r,\SL_2(\C))\to \X(F_r,\PSL_2(\C))$ be the quotient map induced by $\SL_2\to \PSL_2$.

Given the isomorphism $\X(F_r,\SL_2(\C))\quot Z^r\cong \X(F_r,\PSL_2(\C))$ induced by $\varphi$, for every $g_1,...,g_n\in F_r$ such that $\deg(g_1,...,g_n):=\sum_{i=1}^nv(g_i)=0$ there exists a unique $\lambda_{g_1,...,g_n}\in \C[\X(F_r,\PSL_2(\C))]$ such that $$\tr(g_1)\cdots \tr(g_n)=\lambda_{g_1,...,g_n}\circ \varphi.$$  Now let $$\overline{\mathcal{G}}(r)=\{\lambda_{g_1,...,g_n}\ |\ g_1,...,g_n\in \mathcal{G}(r)\text{ and }\deg(g_1,...,g_n)=0\}.$$  Then \cite[Theorem 7]{SiFinite} shows $\C[\X(F_r,\PSL_2(\C))]$ is generated by $\overline{\mathcal{G}}(r)$.

As an example (see \cite[Corollary 10]{SiFinite}), since $\C[\X(F_2,\SL_2(\C))]=\C[\tr(\xb),\tr(\yb),\tr(\xb\yb)]$ we conclude $$\C[\X(F_2,\SL_2(\C))]=\C[\tr(\xb)^2,\tr(\yb)^2,\tr(\xb\yb)^2,\tr(\xb)\tr(\yb)\tr(\xb\yb)].$$

\section{Relations of the Character Variety}\label{section-relations}

Finding presentations for coordinate rings $\C[\X(\Gamma, \SL_2(\C))]$ of $\SL_2(\C)$-character varieties is a ``cottage industry."  In part, this stems from $\PSL_2(\C)$ being the isometry group for the upper-half space model of hyperbolic 3-space.  Knot complements are 3-manifolds, most of which admit hyperbolic structures, and Mostow rigidity implies such hyperbolic structures are unique (up to conjugation).  Therefore, the $\PSL_2(\C)$-character variety of $\Gamma$, when $\Gamma$ is the fundamental group of the manifold, is naturally related to knot invariants and hyperbolic geometry (see \cite{CS} for the beginning of the story).  For example, \cite{PrSi} shows that the ``classical limit'' of the Kauffman bracket skein module of the 3-manifold is the coordinate ring of the corresponding character variety.  In \cite{CCGLS} a knot invariant called the A-polynomial is defined in terms of character varieties, and in \cite{Petersen-A} it is shown how to compute certain A-polynomials using the explicit structure of the coordinate ring.  

Here are some example recent papers concerning the structure of $\SL_2(\C)$ or $\PSL_2(\C)$-character varieties for various cases of $\Gamma=\pi_1(M)$: \cite{MPL} describes the case of two-bridge knots, \cite{BaPe} describes the case of once-punctured torus bundles, \cite{TranPet} handles double twist links, \cite{Tran-pretzel} concerns the case of pretzel knots, \cite{Oller, MaOl, Munoz-torus} completely describe the case of torus knots and links, \cite{HiLo} handles periodic knots and links, \cite{Landes} concerns the Whitehead link and certain hyperbolic 2-bridge links, and \cite{Qa} and \cite{Tran-onerelator} describe certain general families of 1-relator groups $\Gamma$ in the same spirit.

There are other applications of this structure. For example, in \cite{G9} Goldman uses the structure of the coordinate ring to describe the Fricke spaces of hyperbolic structures on surfaces, and in \cite{GX, GoXi-Torelli} Goldman and Xia use the explicit structure of the coordinate ring to analyze dynamics on certain moduli spaces of interest (called {\it relative} character varieties). 

There are many other examples where the explicit structure of the coordinate ring of $\X(\Gamma, \SL_2)$ is important, and we apologize for not citing everyone.

In this section we describe a fast and effective algorithm that takes as input any finite presentation of a discrete group $\Gamma$ and produces as output an explicit presentation of $\C[\X(\Gamma, \SL_2(\C))]/\sqrt{0}$.  When $\Gamma$ is not a free group, as shown in \cite{KaMi, KaMi2} and more generally in \cite{Rap} the character variety may not be reduced and so we only obtain the set-theoretic cut-out of the variety $\X(\Gamma, \SL_2)$.  However, when $\Gamma$ is free or when it is a closed surface (among some other examples) it is known $\X(\Gamma, \SL_2)$ is reduced (see \cite[Theorem 7.3]{PrSi}) and so we obtain the scheme-theoretic cut-out since our description does not have nilpotents.

\subsection{Free Groups}\label{sec:fgrels}

In 2003, Drensky \cite{D} determined the ideal of relations for $\C[\Y_r]$.  We now write the relations for $\C[\X_r]$ from those in $\C[\Y_r]$.  

\begin{theorem}\label{thm-scheme}
The scheme $\mathcal{X}_r=\hm(F_r,\SL_2(\C))\quot \SL_2(\C)$ is isomorphic to $$\mathrm{Spec}\left(\C[t_1,...,t_{\frac{r(r^2+5)}{6}}]/\mathfrak{I}_r\right)$$ where $\mathfrak{I}_r$ is an ideal generated by $\frac{1}{2}\left(\binom{r}{3}^2+\binom{r}{3}\right)+r\binom{r}{4}$ polynomials described as follows.

Let $\zb_i=\xb_i-\frac{1}{2}\tr(\xb_i)\id$ be generic traceless matrices, and let $$s_3(\ab_1,\ab_2,\ab_3)=\sum_{\sigma\in S_3}\mathrm{sign}(\sigma) \ab_{\sigma(1)}\ab_{\sigma(2)}\ab_{\sigma(3)}.$$
 
Type 1 relations:  $$\tr(s_3(\zb_{i_1},\zb_{i_2},\zb_{i_3}))\tr(s_3(\zb_{j_1},\zb_{j_2},\zb_{j_3}))+18\det(\tr\left((\zb_{i_{row}}\zb_{j_{column}})\right)=0,$$ for $1\leq i_1<i_2<i_3\leq r$, $1\leq j_1<j_2<j_3\leq r$.   
   
Type 2 relations: $$\sum_{k=0}^3(-1)^k\tr(\zb_i\zb_{p_k})\tr(s_3(\zb_{p_0},...,\widehat{\zb_{p_k}},...,\zb_{p_3}))=0,$$ where $1\leq i\leq r$, $1\leq p_0<p_1<p_2<p_3\leq r$ and $\widehat{\zb}$ means omission.  
\end{theorem}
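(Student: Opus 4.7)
The plan is to descend Drensky's 2003 presentation of $\C[\Y_r]$ along the isomorphism $\C[\Y_r]/\Delta \cong \C[\X_r]$ recalled in Section \ref{gen}, where $\Delta$ is the ideal generated by $\det(\xb_i) - 1$ for $i=1,\dots,r$. No new trace identities need to be established; the work is purely in translating Drensky's generators and relations from $\C[\Y_r]$ to $\C[\X_r]$.

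First I would record Drensky's theorem in traceless-matrix form: $\C[\Y_r]$ is presented by the $2r + \binom{r}{2} + \binom{r}{3}$ generators $\tr(\xb_i)$, $\tr(\zb_i^2)$, $\tr(\zb_i\zb_j)$ $(i<j)$, and $\tr(\zb_i\zb_j\zb_k)$ $(i<j<k)$ modulo two explicit families of relations whose shape already matches Type 1 and Type 2. Second, Cayley--Hamilton applied to $\zb_i = \xb_i - \tfrac{1}{2}\tr(\xb_i)\id$ gives $\zb_i^2 = \bigl(\tfrac{1}{4}\tr(\xb_i)^2 - \det(\xb_i)\bigr)\id$, hence $\tr(\zb_i^2) = \tfrac{1}{2}\tr(\xb_i)^2 - 2\det(\xb_i)$. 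Passing to $\C[\Y_r]/\Delta$ substitutes $\det(\xb_i) = 1$, so the $r$ defining polynomials of $\Delta$ translate precisely into the rule $\tr(\zb_i^2) \mapsto \tfrac{1}{2}\tr(\xb_i)^2 - 2$. Eliminating the $r$ redundant generators $\tr(\zb_i^2)$ leaves exactly the $r + \binom{r}{2} + \binom{r}{3} = \tfrac{r(r^2+5)}{6}$ coordinates $t_1,\dots,t_{r(r^2+5)/6}$ listed in the theorem.

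Third, I would apply that substitution inside Drensky's relations. A Type 1 relation involves $\tr(\zb_{i_a}^2)$ only on diagonal entries of the $3 \times 3$ matrix $\bigl(\tr(\zb_{i_a}\zb_{j_b})\bigr)$ when $i_a = j_b$; a Type 2 relation involves it only in the single summand with $p_k = i$ (when that occurs). In both cases the substitution is carried out tacitly, leaving the relations stated as in the theorem. Counting unordered pairs of increasing triples (with repetition allowed) gives $\binom{\binom{r}{3}+1}{2} = \tfrac{1}{2}\bigl(\binom{r}{3}^2 + \binom{r}{3}\bigr)$ Type 1 relations, and pairs $(i, \{p_0<p_1<p_2<p_3\})$ give $r\binom{r}{4}$ Type 2 relations, reproducing the claimed total. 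The main obstacle is simply to invoke Drensky's theorem at the scheme-theoretic level: it is this scheme-theoretic content of \cite{D} that secures the $\mathrm{Spec}$ identification $\mathcal{X}_r \cong \mathrm{Spec}\bigl(\C[t_i]/\mathfrak{I}_r\bigr)$ rather than a mere agreement of the underlying varieties, and everything else in the argument is bookkeeping around the Cayley--Hamilton substitution.
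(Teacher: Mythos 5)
Your proposal is correct and follows essentially the same route as the paper's proof: both descend Drensky's presentation of $\C[\Y_r]$ to $\C[\X_r]$ by specializing the determinants to $1$ (equivalently, eliminating the $r$ generators $\tr(\zb_i^2)$ via Cayley--Hamilton), and the generator and relation counts agree. The only cosmetic difference is in how the $\mathrm{Spec}$-level identification is secured --- the paper argues that $\mathcal{X}_r$ and the presented ring are both reduced and irreducible, whereas you lean directly on Drensky's theorem giving the exact defining ideal of $\C[\Y_r]$ --- and your parenthetical remark that $\tr(\zb_{i_a}^2)$ occurs ``only on diagonal entries'' of the $3\times 3$ matrix is slightly off (overlapping triples can place it off the diagonal), but this does not affect the argument.
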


\begin{proof} 
Since $\mathcal{X}_r$ is reduced and irreducible, being the affine GIT quotient of a smooth group scheme, \cite[Proposition 2.6]{Ha} implies the scheme structure of $\mathcal{X}_r$ is determined by the structure of the variety $\X_r$. 

Recall, $$\C[\Y_r]/\Delta_r=\C[x^k_{ij}]^{\SL_2(\C)}/\Delta_r\approx \left( \C[x^k_{ij}]/\Delta_r \right)^{\SL_2(\C)}=\C[\X_r],$$  where $$\Delta_r=\langle x^{k}_{11}x^{k}_{22}-x^{k}_{12}x^{k}_{21}-1\ |\ 1\leq k\leq r\rangle,$$ namely, the ideal generated by determinants. 

The minimal generating set for $\C[\X_r]$ described in the previous section has cardinality $\binom{r}{3}+\binom{r}{2}+\binom{r}{1}=\frac{r(r^2+5)}{6}$.  These same generators lift to generators of $\C[\Y_r]$ when the $r$ determinants are included in the generating set (and give a minimal generating set for $\C[\Y_r]$).  Let $N_r=\binom{r}{3}+\binom{r}{2}+2\binom{r}{1}$.  Denote this minimal generating set for $\C[\Y_r]$ by $t_1,...,t_{N_r}$, and let $\mathfrak{J}$ be the ideal of relations for $\C[\Y_r]$ in those generators.   Then $\C[\Y_r]=\C[t_1,...,t_{N_r}]/\mathfrak{J}$.  Since the quotient of $\C[\Y_r]$ by $\Delta_r$ exactly specializes the $r$ determinant generators to 1 (which are algebraically independent parameters), the ideal of relations for $\X_r$ is exactly the corresponding specialization of $\mathfrak{J}$.  So, it suffices to solve the problem for $\C[\Y_r]$.  We note that since $\xb^2-\tr(\xb)\xb+\det(\xb)\id=\mathbf{0}$, specializing the determinants of the $r$ generic matrices to 1 is the same as setting $\tr(\xb_i^2)=\tr(\xb_i)^2-2$ in the generators of $\mathfrak{J}$.

We sketch the treatment in \cite{DF} for the description of $\C[\Y_r]$.  The key idea is that the conjugation action of $\PSL_2(\C)$ on $\mathfrak{sl}_2(\C)$ is equivalent to the orthogonal action of $\mathsf{SO}_3(\C)$ on $\mathfrak{so}_3(\C)$ where the killing form in the former takes the place of the dot product in the latter.

Indeed, \begin{align*}\mathfrak{gl}_2(\C)^{\times r}\quot \SL_2(\C)&=\mathfrak{gl}_2(\C)^{\times r}\quot \mathsf{PSL}_2(\C)\\&=\mathfrak{gl}_2(\C)^{\times r}\quot \mathsf{SO}_3(\C)\\&\cong \C\left(\frac{x_{11}+x_{22}}{2}\right)^{\times r}\bigoplus \mathfrak{so}_3(\C)^{\times r}\quot \mathsf{SO}_3(\C),\end{align*} where the coordinates for $\mathfrak{gl}_2(\C)$ are $\{x_{11},x_{21},x_{12},x_{22}\}$.

However, Weyl solved the relations problem for orthogonal invariants in 1939 in his book {\it The Classical Groups, Their Invariants and Representations} \cite{Weyl}. Rewriting those invariants in terms of traces then gives the Type 1 and Type 2 relations stated in the above theorem (please see \cite{DF} or \cite{D} for details).  Since the quotient by $\mathfrak{J}$ is reduced, and $\mathfrak{J}$ cuts out an irreducible variety, the result follows.
\end{proof}

\begin{remark}
In \cite{GoMo}, using results of \cite{Magnus, Vogt}, a different, but necessarily equivalent, description of $\X_r$ is given by their Theorem 3.1.  In their description, there are four families of relations $($see their Section 5$)$ as opposed to our description which consists of only two families of relations.  There is no contradiction since there are many different bases for the same ideal, and the cardinality of different bases for an ideal need not be the same.
\end{remark}

\begin{remark}
In \cite{La0, La1} the relations problem was solved for $\X(F_2,\SL_3)$.  For higher rank free groups $F_r$ and $G=\SL_3$ the relations problem remains open.  There is little to nothing known about the relations for $\X(F_r,\SL_n)$ for $r\geq 2$ and $n\geq 4$.  
\end{remark}

\subsection{Finitely Presented Groups}\label{sec:fingrels}

For a finitely presented group $\Gamma$, $\X(\Gamma,G)$ is always cut out of $\X(F_r,G)$ by using the relations in $\Gamma$. This can be made explicit when $G=\SL_2(\C)$ as per results in \cite{GoMo}, \cite{Magnus}, and \cite{Vogt}.
 
\begin{theorem}[Theorem 3.2, \cite{GoMo}]\label{gomo-thm}
Let $\Gamma=\langle \gamma_1,....,\gamma_r\ |\  R_i,\ i\in I\rangle$, and denote $\gamma_0=1$.  Then the closed points of $\X(\Gamma,\SL_2(\C))$ are given by $$\{[\rho]\in \X(F_r,\SL_2(\C))\ |\ \mathrm{tr}(\rho(R_i\gamma_j))-\mathrm{tr}(\rho(\gamma_j))=0,\forall i,j\}.$$ 
\end{theorem}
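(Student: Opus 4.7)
The plan is to prove the asserted set equality by two containments. For the forward direction, if $[\rho]$ lies in the image of the natural embedding $\X(\Gamma,\SL_2(\C))\hookrightarrow \X(F_r,\SL_2(\C))$ from Section~\ref{gen}, then $[\rho]$ has a representative with $\rho(R_i)=\id$, whence $\tr(\rho(R_i\gamma_j))=\tr(\rho(\gamma_j))$ holds tautologically; because the trace of any word in $\rho(\gamma_1),\ldots,\rho(\gamma_r)$ is conjugation invariant, this vanishing descends to a well-defined condition on $\X(F_r,\SL_2(\C))$ that is independent of the representative chosen.

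The substantive direction is the reverse containment: given a polystable $\rho\in\hm(F_r,\SL_2(\C))$ satisfying all the stated trace identities, I need to show $\rho(R_i)=\id$ for each $i$, so that $\rho$ factors through $\Gamma$. Since a polystable representation into $\SL_2(\C)$ is either irreducible or completely reducible, I would split into these two cases. In the completely reducible case, the matrices $\rho(\gamma_k)$ can be simultaneously diagonalized, so $\rho(R_i)$ is diagonal of the form $\mathrm{diag}(\mu,\mu^{-1})$, and the single equation $\tr(\rho(R_i))=2$ coming from $j=0$ forces $\mu=1$, hence $\rho(R_i)=\id$.

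The irreducible case is the main obstacle and the heart of the argument. I would set $A=\rho(R_i)$, $B_j=\rho(\gamma_j)$, and $C=A-\id$, so the hypotheses rewrite as $\tr(CB_j)=0$ for $j=0,1,\ldots,r$. From $j=0$ one reads off $\tr(C)=0$; combined with $\det(A)=1$ this forces $\det(C)=0$, so either $C=0$ (and we are done) or $C$ is a rank one nilpotent, in which case I would factor $C=uv^T$ with $v^Tu=0$, so that $u$ spans the line $\ker v^T\subset\C^2$. For each $j\geq 1$ the identity $v^TB_ju=\tr(CB_j)=0$ then forces $B_ju\in\ker v^T=\C u$, making $u$ a common eigenvector of $B_1,\ldots,B_r$ and contradicting irreducibility. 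Hence $C=0$ in both cases, and $\rho(R_i)=\id$ as required. The key insight that makes the irreducible step work is the rank one structure forced on $A-\id$ by the $\SL_2$ determinant relation: the $r+1$ linear conditions $\tr(CB_j)=0$ would not span the dual of $\mathrm{M}_2(\C)$ for small $r$, but they suffice precisely because $A-\id$ is pinned to the very restrictive shape $uv^T$ before a Schur-type argument is applied.
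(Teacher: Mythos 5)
Your proof is correct and follows essentially the same route as the paper's: the $j=0$ condition pins $\rho(R_i)$ down as a unipotent matrix (trace $2$, determinant $1$), and the remaining trace conditions force a common invariant line for the $\rho(\gamma_j)$ unless $\rho(R_i)=\id$. The only cosmetic differences are that you handle the nontrivial unipotent case via the rank-one factorization $C=uv^T$ rather than conjugating to the standard parabolic normal form, and you case-split on the polystable representative (irreducible versus simultaneously diagonalizable) at the outset instead of deducing reducibility and then passing to the diagonal semisimplification at the end.
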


\begin{remark}
The relations $\mathrm{tr}(\rho(R_i\gamma_j))-\mathrm{tr}(\rho(\gamma_j))=0$ above cutting $\X(\Gamma, \SL_2)$ out of $\X(F_r,\SL_2)$ do not generally give the scheme structure of $\X(\Gamma,\SL_2)$.  It gives only a set-theoretic cut-out, i.e., an algebraic bijection to the maximal ideals in $\C[\X(\Gamma,\SL_2)]$ and so determines the structure of $\C[\X(\Gamma, \SL_2)]/\sqrt{0}$.
\end{remark}

\begin{remark}
It is an open problem to determine analogues of Theorem \ref{gomo-thm} for $\SL_n(\C)$ for $n\geq 3$.  This would be very useful.
\end{remark}
 
We now sketch the proof in \cite{GoMo} for the sake of the reader.

\begin{proof}
For $$[\rho]\in \{[\rho]\in \X(F_r,\SL_2(\C))\ |\ \mathrm{tr}(\rho(R_i\gamma_j))-\mathrm{tr}(\rho(\gamma_j))=0,\forall i,j\},$$ either $\rho(R_i)=\id$ for all $i$ (in which case we are done) or $\rho(R_{i_0})$ is parabolic for some $i_0$.  In the latter case, up to conjugation, we can assume $\rho(R_{i_0})=\left(\begin{array}{cc}1&1\\0&1\end{array}\right)$. But then, since $\tr(\rho(R_{i_0})\rho(\gamma_i))=\tr(\rho(\gamma_i))$ for all $i$, we conclude that $\rho$ is upper-triangular.  Thus, the semisimple representative $\rho^{ss}\in [\rho]$ is diagonal.  Then $\tr(\rho^{ss}(R_{i_0}))=\tr(\id)=2$ implies $\rho^{ss}(R_{i_0})=\id$.  This completes the proof, since the same conclusion follows for any $i$ where $\rho(R_i)$ is parabolic, and the conjugation orbit of the semisimple representative in $[\rho]$ is the unique closed sub-orbit.
\end{proof}

\subsection{The Algorithm}
The trace identities described in Section \ref{sec:freegroups} can be implemented into a \emph{trace reduction} algorithm. The symbolic computation software \emph{Wolfram Mathematica} \cite{mathematica} provides the structure of replacement rules and patterns, which makes this task simple.  We also have made available a version in {\it SageMath} and Python for those who do not have {\it Mathematica}.

When given an algebraic expression, for example a polynomial, \emph{Mathematica} will automatically collect terms and reorder them alphabetically. We can define a new type of data for traces of words called \emph{Tr}, and replacement rules for this data type which {\it Mathematica} will follow until the trace is completely simplified.

We define the following rules :
\begin{enumerate}
   \item The trace of the empty word is replaced by the value $2$, the trace of the identity matrix in $\SL_2(\C)$.
   \item The trace of a word containing a negative power of any letter is simplified using the identity
   \[\tr(\ub\xb^{-k}\vb) = \tr(\xb^k)\tr(\vb\ub)-\tr(\ub\xb^{k}\vb).\]
   \item The trace of any word containing a power of a letter larger than or equal to two is simplified using the identity
   \[\tr(\ub\xb^2\vb)=\tr(\xb)\tr(\xb\vb\ub)-\tr(\vb\ub).\]
   \item The trace of any word is replaced by the cyclic permutation of its letters which is minimal for the lexicographic order.
 \item Any word of length greater than three is reduced using Equation \eqref{eqn:4relation}.
\end{enumerate}

 Every time one of the above cases happens, \emph{Mathematica} will automatically replace the expression with its simplified version, recursively until none of the rules apply. For instance,
 \begin{example}
 \begin{verbatim}

 In[179]:= Tr[Word[1,2,-3,1]]
 Out[179]= Tr[Word[3]](-Tr[Word[2]]+Tr[Word[1]]
 Tr[Word[1,2]])+Tr[Word[2,3]]-Tr[Word[1]]Tr[Word[1,2,3]]
 \end{verbatim}
 \end{example}
 
 The expression $\mathrm{Tr[Word[}i_1,\dots,i_k\mathrm{]]}$ represents the trace of the word $\xb_{i_1}\xb_{i_2}\cdots \xb_{i_k}$, and by convention $\xb_{-i}=\xb_i^{-1}$ for $i>0$.
 Here we have reduced the trace of the word $\xb_1\xb_2\xb_3^{-1}\xb_1$ to an expression using only traces of words of length three or less. The function \emph{ToVariables} makes the result more easily readable.
 \begin{example}
   \begin{verbatim}

     In[180]:=ToVariables[Tr[Word[1,2,-3,1]]]
     Out[180]=
   \end{verbatim}
   \begin{center}$t_{\{3\}} \left(t_{\{1\}} t_{\{1,2\}}-t_{\{2\}}\right)+t_{\{2,3\}}-t_{\{1\}} t_{\{1,2,3\}}$
   \end{center}
\end{example}
$t_{\{i,j,k\}}$ represents the trace of the word $\xb_i\xb_j\xb_k$.
 Once the trace reduction algorithm is in place, finding a presentation for the $\SL_2(\C)$-character variety of any finitely presented group is simple. There are two types of relations to compute, the free group relations and the relations coming from the presentation of the group.

 The free group relations subdivide into ``Type 1'' and ``Type 2'' (Section \ref{sec:fgrels}). The functions \emph{rel1} and \emph{rel2} in the notebook respectively output each type of relation given the right number of parameters. The relations are in terms of traces so are automatically simplified by the substitution rules described above.

  The function \emph{presrel} takes a word and a number of generators. It outputs a collection of relations for the coordinate ring of the character variety of a group having this word in its presentation (Section \ref{sec:fingrels}). For a relation given by the word $\ub$, the corresponding character variety relations are
  \[\tr(\ub\xb_i) - \tr(\xb_i)=0.\]

  Finally, the function \emph{Presentation} makes use of the previously described functions in order to take as input a finite presentation of a group $\Gamma$ and outputs a presentation for the (reduced) coordinate ring of the $\SL_2(\C)$-character variety.

The input to \emph{Presentation} takes the form of an integer, the number of generators, followed by a list of words, the relations in the presentation of $\Gamma$. The output is a pair, the generators and the relations defining $\X(\Gamma,\SL_2(\C))$.
  \begin{example}
    In this example, we use the presentation $$\Gamma=\{a,b~|~abab\}.$$
    \begin{verbatim}
In[216]:= Presentation[2,{Word[1,2,1,2]}]
Out[216]=
    \end{verbatim}
\begin{align*}
\Big\{ 
\{ t_{\{1\}},& t_{\{2\}}, t_{\{1,2\}} \},\\ 
&\left\{
t_{\{1,2\}}^2-4 , t_{\{1\}} t_{\{1,2\}}^2-t_{\{2\}}
   t_{\{1,2\}}-2 t_{\{1\}} , t_{\{2\}}
   t_{\{1,2\}}^2-t_{\{1\}} t_{\{1,2\}}-2 t_{\{2\}} 
   \right\}\Big\}
\end{align*}     
\end{example}

The program described above is available at \url{http://math.gmu.edu/~slawton3/trace-identites.nb}.See also \url{http://math.gmu.edu/~slawton3/Main.sagews}, and \url{http://math.gmu.edu/~slawton3/charvars.py}.

\section{Examples}
\subsection{Free Groups}\label{rank4subsection}

In Section \ref{sec:freegroups} we described $\X_r$ for $r=1,2,3$.  As a first demonstration of the above algorithm, we now handle the $r=4$ case.

The fundamental group of the 5-holed sphere is a free group on four letters with the following presentation: $$\pi=\langle a, b, c, d, e\ |\ abcde=1\rangle\cong \langle a,b,c,d\rangle,$$ since $e^{-1}=abcd$.   The character variety is by definition $$\hm(\pi,\SL_2(\C))\quot \SL_2(\C)\cong \SL_2(\C)^{\times 4}\quot \SL_2(\C),$$ given by $$[\rho]\mapsto [(\rho(a),\rho(b),\rho(c),\rho(d))]=[(\ab,\bb,\cb,\db)].$$ So this is the moduli space of (polystable) flat $\SL_2(\C)$-bundles over the 5-holed sphere.

The coordinate ring has the following presentation:
$$\C[\SL_2(\C)^{\times 4}\quot \SL_2(\C)]=\C[r_1,...,r_9][t_1,...,t_5]/(f_1,...,f_{14}),$$ where $\{r_1,...,r_9\}$ is a minimal generating set for the rational function field, $\{r_1,...,r_9,t_1,...,t_5\}$ is a minimal generating set for the coordinate ring, and $\{f_1,...,f_{14}\}$ is a minimal generating set for the ideal of relations in terms of the generators. 
  
We obtain the following consequences. $\SL_2(\C)^{\times 4}\quot \SL_2(\C)$ minimally embeds in $\C^{14}$ and its dimension is $9$.  Since it has 14 relations it is very far from a complete intersection like the rank $1,2,3$ cases.   More still, at a generic smooth point $[\rho]$, $\{dr_1,...,dr_9\}$ generates $$T^*_{[\rho]}\left(\SL_2(\C)^{\times 4}\quot\SL_2(\C)\right)\cong \C^9.$$
 
Here are the formulas for the generators: 

$r_1=\tr(\ab),r_2=\tr(\bb),r_3=\tr(\cb), r_4=\tr(\db),r_5=\tr(\ab\bb),r_6=\tr(\ab\cb),r_7=\tr(\ab\db),r_8=\tr(\bb\cb),r_9=\tr(\bb\db)$
$t_1=\tr(\cb\db),t_2=\tr(\ab\bb\cb),t_3=\tr(\ab\bb\db),t_4=\tr(\ab\cb\db),t_5=\tr(\bb\cb\db)$

There are two types of relations (degree 5 and degree 6 in the matrix entries) and the rest is combinatorics.

In Appendix \ref{appendix-r4}, and also in the {\it Mathematica} notebook\footnote{Available at \url{http://math.gmu.edu/~slawton3/trace-identities.nb}}, we list the formulas for the ideal of relations for this case.  The polynomials $f_1$ through $f_4$ are all of one type (Type 2), $f_5$ through $f_8$ are the rank 3 relation for each set of 3, and $f_9$ through $f_{14}$ are a generalized relation of the same type as $f_5$ through $f_8$ (Type 1). 
   
\subsection{Finitely Presented Groups.}

Recall, $\X_2=\C^3$ and so for all $w\in F_2=\langle a,b\rangle$, there is a unique $P_w\in \C[x,y,z]$ such that $$\tr(w)=P_w(\tr(a),\tr(b),\tr(ab)).$$

The first example we consider is the figure eight knot (compare \cite[Proposition 3.1]{HP16}).   According to \cite{Rol76}, the fundamental group of the figure eight knot complement in $S^3$ admits a presentation $$\Gamma_{F8}=\Big\langle a, b\ |\ ab^{-1}a^{-1}bab^{-1}aba^{-1}b^{-1}\Big\rangle.$$  Let $w=ab^{-1}a^{-1}bab^{-1}aba^{-1}b^{-1}.$

The character variety $\X(\Gamma_{F8},\SL_2(\C))$ is given by $$\{\mathbf{v}\in \C^3\ |\ P_w(\mathbf{v})=2,\ P_{aw}(\mathbf{v})=x,\ P_{bw}(\mathbf{v})=y\}, $$ where $\mathbf{v}=(x,y,z)$.

Using the algorithm described in this paper, and simplifying with a Gr\"obner basis algorithm, we conclude that the $\SL_2(\C)$-character variety of the figure 8 knot is the zero locus of the ideal: $$\left\langle y^2-z-2,y^2 z-2 y^2-z^2+z+1,x-y\right\rangle.$$
Here is a picture of its $\R$-points:
\begin{center}
\includegraphics[scale=.4]{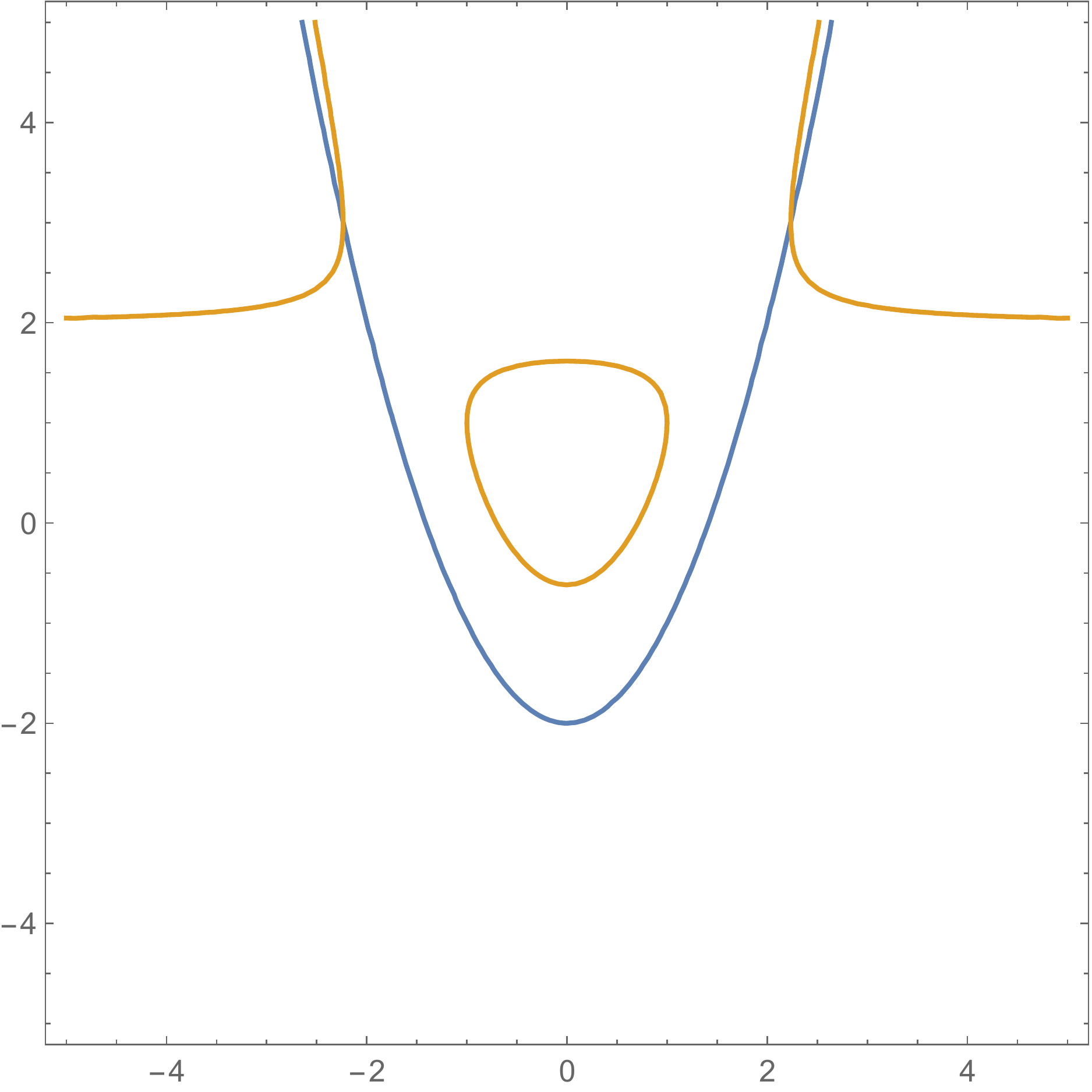}
\end{center} 
 
Clearly it has two components. With some algebraic manipulation (again using our program) one can show that this gives an equivalent solution as described in \cite[Proposition 3.1]{HP16}.  In particular, the component $y^2-z-2=0$ is the reducible representations while $y^2 z-2 y^2-z^2+z+1=0$ is the component of irreducible representations.
 
The next example we consider is the character variety of the Whitehead link complement (compare \cite{Landes}). 

The fundamental group of the Whitehead link complement in $S^3$ admits the presentation $$\Gamma_{WL}=\Big\langle a,b\ |\  abab^{-1}a^{-1}b^{-1}aba^{-1}b^{-1}a^{-1}baba^{-1}b^{-1}\Big\rangle.$$  Let $w=abab^{-1}a^{-1}b^{-1}aba^{-1}b^{-1}a^{-1}baba^{-1}b^{-1}$.
 
The character variety $\X(\Gamma_{WL},\SL_2(\C))$ is given by $$\{\mathbf{v}\in \C^3\ |\ P_w(\mathbf{v})=2,\ P_{aw}(\mathbf{v})=x,\ P_{bw}(\mathbf{v})=y\}. $$ 

Using the algorithm in this paper, we obtain a set of 3 polynomial relations.  Call them $P_1,P_2,$ and $P_3$.  Now $P_{abw}(\mathbf{v})=z$ is also a relation since $w$ evaluates to the identity at any representation of $\Gamma_{WL}$.  Define $P_4= P_{abw}(\mathbf{v})-z$.  The Gr\"obner basis of the ideal $\langle P_1,P_2,P_3,P_4\rangle$ is principally generated by $$P=\left(x^2+y^2+z^2-x y z-4\right) \left(-xy-2z+x^2z+y^2z-xyz^2+z^3\right).$$

Since $I_1=\langle P_1,P_2,P_3\rangle \subset \langle P_1,P_2,P_3,P_4\rangle=I_2$ we know that the corresponding algebraic sets satisfy $V(\langle P_1,P_2,P_3\rangle) \supset V(\langle P_1,P_2,P_3,P_4)\rangle$.  But $V(\langle P_1,P_2,P_3\rangle)$ cuts out the character variety by the algorithm, and as noted above every point in $V(\langle P_1,P_2,P_3)\rangle$ must also satisfy $P_4$.  Thus the algebraic sets are the same and so we conclude by the Nullstellensatz that $\sqrt{I_1}=\sqrt{I_2}$.  However, using polynomial division, one can check that $I_1\not=I_2$.

Regardless, the character variety of the Whitehead link complement corresponds to the hypersurface $\{\mathbf{v}\in \C^3\ |\  P=0\},$ which has two irreducible components.

From \cite{G9} we see the first component of this hypersurface is the trace of the commutator which corresponds exactly to the reducible representations.  The second component is exactly the equation defining the canonical component of the Whitehead link (see \cite[Proposition 4]{Landes}).  Moreover, as pointed out to us by the referee, using pairs of matrices found in \cite[Example 1]{wiel} that correspond to a discrete faithful representation (a hyperbolic structure) of the Whitehead link, one may check that $-xy-2z+x^2z+y^2z-xyz^2+z^3$ evaluates to zero (and so deduce it is a canonical component).  In particular, there is a discrete faithful representation of the Whitehead link determined by $\rho(a)=\left(\begin{array}{cc}1&i\\0&1\end{array}\right)$ and $\rho(b)=\left(\begin{array}{cc}1&0\\1+i&1\end{array}\right)$, and setting $x=\tr(\rho(a))=2$, $y=\tr(\rho(b))=2$ and $z=\tr(\rho(ab))=1+i$ reduces the defining equation of the component $-xy-2z+x^2z+y^2z-xyz^2+z^3$ to 0.

Here is a picture of the $\R$-points of the canonical component:
\begin{center}
\includegraphics[scale=.25]{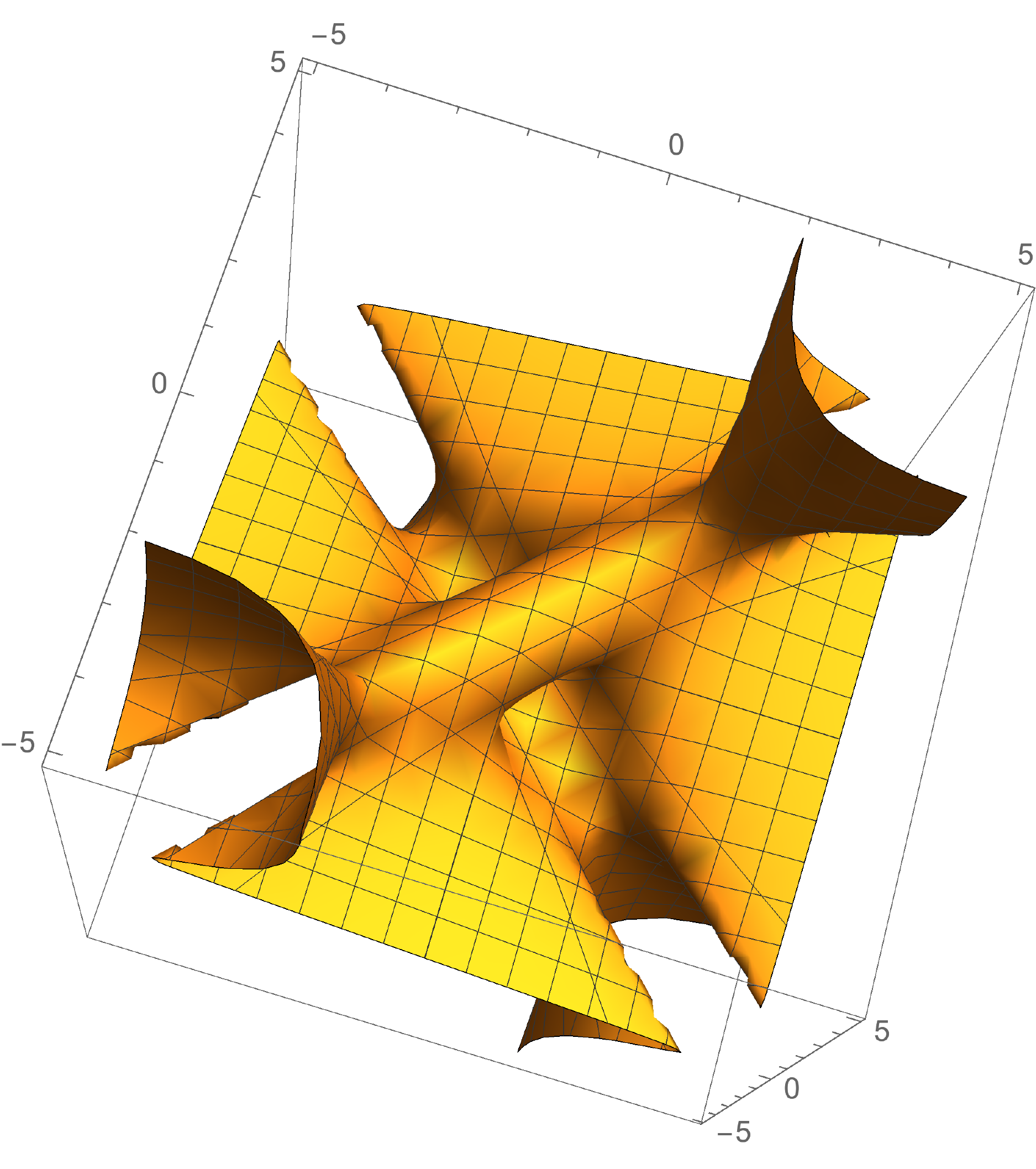}
\end{center}

Here is a picture of the $\R$-points of the component consisting of reducible representations:
\begin{center}
\includegraphics[scale=.25]{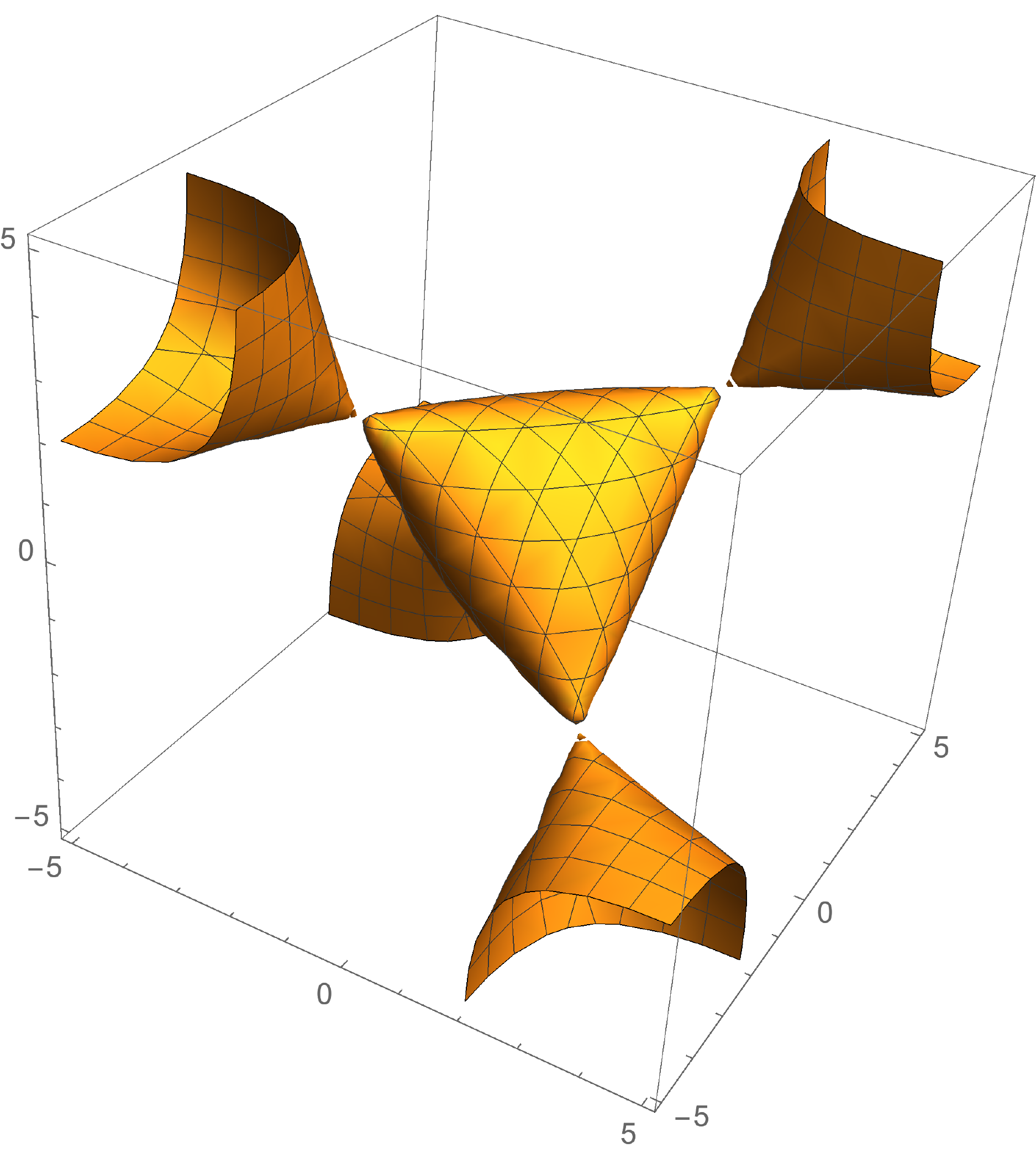}
\end{center}

To be more systematic, we can consider the (orientable closed) hyperbolic manifolds in the census in SnapPy \cite{SnapPy}, ordered by increasing hyperbolic volume.

The Weeks manifold (smallest volume hyperbolic manifold) is first in the list (see \cite{mom}).

By entering  \begin{verbatim}OrientableClosedCensus[0].fundamental_group()\end{verbatim} into SnapPy \cite{SnapPy} we obtain the following presentation for the Weeks manifold $\langle a,b\ |\ aabbaaBaB, aabbAbAbb\rangle$ where $A=a^{-1}$ and $B=b^{-1}$.

Entering this into our algorithm (and simplifying with a Gr\"obner basis algorithm) we obtain a presentation for its (reduced) character variety:

$\C[t_{\{1\}},t_{\{2\}},t_{\{1,2\}}]/
\langle t_{\{1,2\}}^6-3 t_{\{1,2\}}^5+2 t_{\{1,2\}}^4+4 t_{\{1,2\}}^3-12
   t_{\{1,2\}}^2+9 t_{\{1,2\}}-2,-t_{\{1,2\}}^5+3 t_{\{1,2\}}^4+t_{\{2\}}
   t_{\{1,2\}}^3-t_{\{1,2\}}^3-t_{\{2\}} t_{\{1,2\}}^2-6 t_{\{1,2\}}^2-3 t_{\{2\}}
   t_{\{1,2\}}+10 t_{\{1,2\}}+2 t_{\{2\}}-4,t_{\{1,2\}}^5-3 t_{\{1,2\}}^4+2
   t_{\{1,2\}}^3+5 t_{\{1,2\}}^2-13 t_{\{1,2\}}+t_{\{2\}}^3-t_{\{2\}}^2-3
   t_{\{2\}}+8,t_{\{1\}} t_{\{1,2\}}^2-t_{\{2\}} t_{\{1,2\}}^2+t_{\{1\}}
   t_{\{1,2\}}-t_{\{2\}} t_{\{1,2\}}-t_{\{1\}}+t_{\{2\}},-2 t_{\{1,2\}}^5+5
   t_{\{1,2\}}^4-t_{\{1,2\}}^3-9 t_{\{1,2\}}^2-t_{\{2\}}^2 t_{\{1,2\}}-t_{\{2\}}
   t_{\{1,2\}}+19 t_{\{1,2\}}+t_{\{1\}} t_{\{2\}}^2-t_{\{1\}}+t_{\{1\}}
   t_{\{2\}}-8,t_{\{1,2\}}^5-2 t_{\{1,2\}}^4+4 t_{\{1,2\}}^2-t_{\{1\}} t_{\{2\}}
   t_{\{1,2\}}-8 t_{\{1,2\}}+t_{\{1\}}^2+t_{\{2\}}^2\rangle.$

There are six relations that must be simultaneously satisfied.  Factoring the first we see $$\left(t_{\{1,2\}}-2\right) \left(t_{\{1,2\}}^2+t_{\{1,2\}}-1\right)\left( t_{\{1,2\}}^3-2 t_{\{1,2\}}^2+3 t_{\{1,2\}}-1\right)=0.$$ This says that $t_{\{1,2\}}$ has 6 allowed values.  Factoring the fourth relation gives $\left(t_{\{1\}}-t_{\{2\}}\right) \left(t_{\{1,2\}}^2+t_{\{1,2\}}-1\right)=0$ and so either $t_{\{1,2\}}=\frac{1}{2}\left(-1\pm\sqrt{5}\right)$, two of the allowed six values for this coordinate, or $t_{\{1\}}=t_{\{2\}}$.  In the first case where $t_{\{1,2\}}=\frac{1}{2}\left(-1\pm\sqrt{5}\right)$, the third equation specializes to $\left(t_{\{2\}}-2\right) \left(t_{\{2\}}^2+t_{\{2\}}-1\right)=0$ and so $t_{\{2\}}=2,$ or $\frac{1}{2}\left(-1\pm\sqrt{5}\right)$.  If $t_{\{2\}}=2,$ then the fifth equation then says that $t_{(1)}= \frac{1}{2}\left(-1\pm\sqrt{5}\right).$  If $t_{\{2\}}=\frac{1}{2}\left(-1\pm\sqrt{5}\right)$, then the sixth relation becomes $t_{\{1\}}^2+t_{\{1\}}-1$ and so $t_{\{1\}}=\frac{1}{2}\left(-1\pm\sqrt{5}\right)$ too.  Otherwise, $t_{\{1\}}=t_{\{2\}}$.  Now under that assumption, if $t_{\{1,2\}}=2$ then the relations imply $t_{\{1\}}=2,$ or $\frac{1}{2}\left(-1\pm\sqrt{5}\right)$.  Otherwise, the first relation determines the value of $t_{\{1,2\}}$ as a zero of the cubic  $t_{\{1,2\}}^3-2 t_{\{1,2\}}^2+3 t_{\{1,2\}}-1$ and the second relation implies $t_{\{1,2\}}^2-2 t_{\{1,2\}}-t_{\{1\}}+2=0$ which therefore determines $t_{\{1\}}$.  

We see the character variety is a {\it finite set}.  The intersection of this set with the polynomial $t_{\{1\}}^2+t_{\{2\}}^2+t_{\{1,2\}}^2+-t_{\{2\}} t_{\{1\}} t_{\{1,2\}}-4$ gives the reducible locus (non-empty since $(2,2,2)$ is such a point) and its complement gives the locus of irreducible representations (non-empty since the solutions to $t_{\{1,2\}}^3-2 t_{\{1,2\}}^2+3 t_{\{1,2\}}-1=0$ and $t_{\{1,2\}}^2-2 t_{\{1,2\}}-t_{\{1\}}+2=0$ and $t_{\{1\}}=t_{\{2\}}$ are such points).  This is consistent with the analysis in \cite[Example 4.1]{Harada}.

There are 11,031 manifolds in the census of closed orientable hyperbolic 3-manifolds (ordered by volume) in SnapPy \cite{SnapPy}.  

By entering \begin{verbatim}for M in OrientableClosedCensus[0:10]: print(M.fundamental_group()) \end{verbatim} SnapPy returns presentations for the first 10 manifolds in this census.

By entering exactly this output from SnapPy directly into our program using the command \begin{verbatim}FromSnapPyList["OUTPUT"]\end{verbatim} we obtain presentations of the first 10 character varieties on the census (please see the accompanying {\it Mathematica} notebook for this data).  This can be repeated for any of the manifolds in SnapPy.  \footnote{At the time of this writing, our Python program is expected to be incorporated directly into SnapPy.}

\appendix
\section{Ideal for the Rank 4 Case}\label{appendix-r4}

Without further ado, here are the defining relations for $\X_4$ (see Subsection \ref{rank4subsection}):
{\small
\begin{eqnarray*}
f_1&=&3 \tr(\bb\cb\db)\tr(\ab)^2-3 \tr(\cb\db)\tr(\bb)\tr(\ab)^2\\
   &-&3\tr(\bb\cb)\tr(\db)\tr(\ab)^2+3 \tr(\bb) \tr(\cb)\tr(\db)\tr(\ab)^2\\
   &+&3\tr(\ab\db)\tr(\bb\cb)\tr(\ab)-3 \tr(\ab\cb)\tr(\bb\db)\tr(\ab)\\
   &+&3\tr(\ab\bb) \tr(\cb\db)\tr(\ab)+3 \tr(\ab\cb\db)\tr(\bb)\tr(\ab)\\
   &-&3\tr(\ab\bb\db) \tr(\cb)\tr(\ab)-3\tr(\ab\db)\tr(\bb) \tr(\cb)\tr(\ab)\\
   &+&3\tr(\ab\bb\cb) \tr(\db)\tr(\ab)-3 \tr(\ab\bb)\tr(\cb) \tr(\db)\tr(\ab)\\
   &-&6\tr(\ab\db)\tr(\ab\bb\cb)+6\tr(\ab\cb)\tr(\ab\bb\db)-6\tr(\ab\bb)\tr(\ab\cb\db)\\
   &-&12\tr(\bb\cb\db)+6\tr(\cb\db) \tr(\bb)+6\tr(\ab\bb)\tr(\ab\db)\tr(\cb)\\
   &+&6 \tr(\bb\db)\tr(\cb)+6\tr(\bb\cb)\tr(\db)-6 \tr(\bb) \tr(\cb)\tr(\db),
\end{eqnarray*}

\begin{eqnarray*}
f_2&=&-3 \tr(\ab\cb\db)\tr(\bb)^2+3 \tr(\cb\db)\tr(\ab) \tr(\bb)^2\\
   &+&3\tr(\ab\db) \tr(\cb)\tr(\bb)^2-3 \tr(\ab) \tr(\cb)\tr(\db) \tr(\bb)^2\\
   &-&3\tr(\ab\db) \tr(\bb\cb)\tr(\bb)+3 \tr(\ab\cb)\tr(\bb\db) \tr(\bb)\\
   &-&3\tr(\ab\bb) \tr(\cb\db)\tr(\bb)-3 \tr(\bb\cb\db)\tr(\ab) \tr(\bb)\\
   &-&3\tr(\ab\bb\db) \tr(\cb)\tr(\bb)+3 \tr(\ab\bb\cb)\tr(\db) \tr(\bb)\\
   &+&3\tr(\bb\cb) \tr(\ab)\tr(\db) \tr(\bb)+3\tr(\ab\bb) \tr(\cb)\tr(\db) \tr(\bb)\\
   &-&6\tr(\bb\db)\tr(\ab\bb\cb)+6\tr(\bb\cb)\tr(\ab\bb\db)+12\tr(\ab\cb\db)\\
   &+&6\tr(\ab\bb)\tr(\bb\cb\db)-6\tr(\cb\db) \tr(\ab)-6\tr(\ab\db) \tr(\cb)\\
   &-&6\tr(\ab\cb) \tr(\db)-6\tr(\ab\bb) \tr(\bb\cb)\tr(\db)+6 \tr(\ab) \tr(\cb)\tr(\db),
\end{eqnarray*}
 
\begin{eqnarray*}
f_3&=&3 \tr(\ab\bb\db)\tr(\cb)^2-3 \tr(\ab\db)\tr(\bb) \tr(\cb)^2\\
   &-&3\tr(\ab\bb) \tr(\db)\tr(\cb)^2+3 \tr(\ab) \tr(\bb)\tr(\db) \tr(\cb)^2\\
   &+&3\tr(\ab\db) \tr(\bb\cb)\tr(\cb)-3 \tr(\ab\cb)\tr(\bb\db) \tr(\cb)\\
   &+&3\tr(\ab\bb) \tr(\cb\db)\tr(\cb)-3 \tr(\bb\cb\db)\tr(\ab) \tr(\cb)\\
   &+&3\tr(\ab\cb\db) \tr(\bb)\tr(\cb)-3 \tr(\cb\db)\tr(\ab) \tr(\bb) \tr(\cb)\\
   &+&3\tr(\ab\bb\cb) \tr(\db)\tr(\cb)-3 \tr(\bb\cb)\tr(\ab) \tr(\db) \tr(\cb)\\
   &-&6\tr(\cb\db)\tr(\ab\bb\cb)-12\tr(\ab\bb\db)-6\tr(\bb\cb)\tr(\ab\cb\db)\\
   &+&6\tr(\ab\cb)\tr(\bb\cb\db)+6\tr(\bb\db) \tr(\ab)+6\tr(\bb\cb) \tr(\cb\db)\tr(\ab)\\
   &+&6 \tr(\ab\db)\tr(\bb)+6 \tr(\ab\bb)\tr(\db)-6 \tr(\ab) \tr(\bb)\tr(\db),
\end{eqnarray*}

\begin{eqnarray*}
f_4&=& -3 \tr(\ab\bb\cb)\tr(\db)^2+3 \tr(\bb\cb)\tr(\ab) \tr(\db)^2\\
   &+&3\tr(\ab\bb) \tr(\cb)\tr(\db)^2-3 \tr(\ab) \tr(\bb)\tr(\cb) \tr(\db)^2\\
   &-&3\tr(\ab\db) \tr(\bb\cb)\tr(\db)+3 \tr(\ab\cb)\tr(\bb\db) \tr(\db)\\
   &-&3\tr(\ab\bb) \tr(\cb\db)\tr(\db)-3 \tr(\bb\cb\db)\tr(\ab) \tr(\db)\\
   &+&3\tr(\ab\cb\db) \tr(\bb)\tr(\db)+3 \tr(\cb\db)\tr(\ab) \tr(\bb) \tr(\db)\\
   &-&3\tr(\ab\bb\db) \tr(\cb)\tr(\db)+3 \tr(\ab\db)\tr(\bb) \tr(\cb) \tr(\db)\\
   &+&12\tr(\ab\bb\cb)+6\tr(\cb\db)\tr(\ab\bb\db)-6\tr(\bb\db)\tr(\ab\cb\db)\\
   &+&6\tr(\ab\db)\tr(\bb\cb\db)-6\tr(\bb\cb) \tr(\ab)-6\tr(\ab\cb) \tr(\bb)\\
   &-&6\tr(\ab\db) \tr(\cb\db)\tr(\bb)-6 \tr(\ab\bb)\tr(\cb)+6 \tr(\ab) \tr(\bb)\tr(\cb),
\end{eqnarray*}
  
\begin{eqnarray*}
  f_5&=&36 \tr(\ab\bb)^2+36\tr(\ab\cb) \tr(\bb\cb)\tr(\ab\bb)\\
     &-&36 \tr(\ab)\tr(\bb) \tr(\ab\bb)-36\tr(\ab\bb\cb) \tr(\cb)\tr(\ab\bb)\\
     &+&36\tr(\ab\cb)^2+36\tr(\bb\cb)^2+36\tr(\ab\bb\cb)^2+36\tr(\ab)^2\\
     &+&36 \tr(\bb)^2+36\tr(\cb)^2-36 \tr(\bb\cb)\tr(\ab\bb\cb)\tr(\ab)\\
     &-&36 \tr(\ab\cb)\tr(\ab\bb\cb)\tr(\bb)-36 \tr(\ab\cb)\tr(\ab) \tr(\cb)\\
     &-&36\tr(\bb\cb) \tr(\bb)\tr(\cb)+36\tr(\ab\bb\cb) \tr(\ab)\tr(\bb) \tr(\cb)-144,
\end{eqnarray*}
 
\begin{eqnarray*}
f_6&=&36 \tr(\ab\cb)^2+36\tr(\ab\db) \tr(\cb\db)\tr(\ab\cb)-36 \tr(\ab)\tr(\cb) \tr(\ab\cb)\\
   &-&36\tr(\ab\cb\db) \tr(\db)\tr(\ab\cb)+36\tr(\ab\db)^2+36\tr(\cb\db)^2\\
   &+&36\tr(\ab\cb\db)^2+36\tr(\ab)^2+36 \tr(\cb)^2+36\tr(\db)^2\\
   &-&36 \tr(\cb\db)\tr(\ab\cb\db)\tr(\ab)-36 \tr(\ab\db)\tr(\ab\cb\db)\tr(\cb)\\
   &-&36 \tr(\ab\db)\tr(\ab) \tr(\db)-36\tr(\cb\db) \tr(\cb)\tr(\db)\\
   &+&36\tr(\ab\cb\db) \tr(\ab)\tr(\cb) \tr(\db)-144,
\end{eqnarray*}
 
\begin{eqnarray*}
f_7&=&36 \tr(\bb\cb)^2+36\tr(\bb\db) \tr(\cb\db)\tr(\bb\cb)\\
   &-&36 \tr(\bb)\tr(\cb) \tr(\bb\cb)-36\tr(\bb\cb\db) \tr(\db)\tr(\bb\cb)\\
   &+&36\tr(\bb\db)^2+36\tr(\cb\db)^2+36\tr(\bb\cb\db)^2+36\tr(\bb)^2\\
   &+&36 \tr(\cb)^2+36\tr(\db)^2-36 \tr(\cb\db)\tr(\bb\cb\db)\tr(\bb)\\
   &-&36 \tr(\bb\db)\tr(\bb\cb\db)\tr(\cb)-36 \tr(\bb\db)\tr(\bb) \tr(\db)\\
   &-&36\tr(\cb\db) \tr(\cb)\tr(\db)+36\tr(\bb\cb\db) \tr(\bb)\tr(\cb) \tr(\db)-144,
\end{eqnarray*}
 
\begin{eqnarray*}
f_8&=&36 \tr(\ab\bb)^2+36\tr(\ab\db) \tr(\bb\db)\tr(\ab\bb)\\
   &-&36 \tr(\ab)\tr(\bb) \tr(\ab\bb)-36\tr(\ab\bb\db) \tr(\db)\tr(\ab\bb)\\
   &+&36\tr(\ab\db)^2+36\tr(\bb\db)^2+36\tr(\ab\bb\db)^2+36\tr(\ab)^2\\
   &+&36 \tr(\bb)^2+36\tr(\db)^2-36 \tr(\bb\db)\tr(\ab\bb\db)\tr(\ab)\\
   &-&36 \tr(\ab\db)\tr(\ab\bb\db)\tr(\bb)-36 \tr(\ab\db)\tr(\ab) \tr(\db)\\
   &-&36\tr(\bb\db) \tr(\bb)\tr(\db)+36\tr(\ab\bb\db) \tr(\ab)\tr(\bb) \tr(\db)-144,
\end{eqnarray*}
 
\begin{eqnarray*}
f_9&=&18 \tr(\bb\db)\tr(\ab\cb)^2-18\tr(\ab\db) \tr(\bb\cb)\tr(\ab\cb)\\
   &-&18\tr(\ab\bb) \tr(\cb\db)\tr(\ab\cb)-18\tr(\ab\cb\db) \tr(\bb)\tr(\ab\cb)\\
   &+&18\tr(\cb\db) \tr(\ab)\tr(\bb) \tr(\ab\cb)-18\tr(\bb\db) \tr(\ab)\tr(\cb) \tr(\ab\cb)\\
   &+&18\tr(\ab\db) \tr(\bb)\tr(\cb) \tr(\ab\cb)-18\tr(\ab\bb\cb) \tr(\db)\tr(\ab\cb)\\
   &+&18\tr(\bb\cb) \tr(\ab)\tr(\db) \tr(\ab\cb)+18\tr(\ab\bb) \tr(\cb)\tr(\db) \tr(\ab\cb)\\
   &-&18\tr(\ab) \tr(\bb) \tr(\cb)\tr(\db) \tr(\ab\cb)+18\tr(\bb\db) \tr(\ab)^2\\
   &+&18\tr(\bb\cb) \tr(\cb\db)\tr(\ab)^2+18 \tr(\ab\bb)\tr(\ab\db) \tr(\cb)^2\\
   &+&18\tr(\bb\db) \tr(\cb)^2-18\tr(\ab\db) \tr(\ab)\tr(\bb) \tr(\cb)^2\\
   &-&36\tr(\ab\bb)\tr(\ab\db)-72\tr(\bb\db)-36\tr(\bb\cb)\tr(\cb\db)\\
   &+&36\tr(\ab\bb\cb)\tr(\ab\cb\db)-18\tr(\cb\db)\tr(\ab\bb\cb)\tr(\ab)\\
   &-&18 \tr(\bb\cb)\tr(\ab\cb\db)\tr(\ab)+18 \tr(\ab\db)\tr(\ab) \tr(\bb)\\
   &-&18\tr(\ab\db)\tr(\ab\bb\cb)\tr(\cb)-18 \tr(\ab\bb)\tr(\ab\cb\db)\tr(\cb)\\
   &+&18 \tr(\ab\db)\tr(\bb\cb) \tr(\ab)\tr(\cb)+18 \tr(\ab\bb)\tr(\cb\db) \tr(\ab)\tr(\cb)\\
   &-&18 \tr(\cb\db)\tr(\ab)^2 \tr(\bb)\tr(\cb)+18 \tr(\cb\db)\tr(\bb) \tr(\cb)\\
   &+&18\tr(\ab\cb\db) \tr(\ab)\tr(\bb) \tr(\cb)-18\tr(\ab\bb) \tr(\ab)\tr(\cb)^2 \tr(\db)\\
   &+&18\tr(\ab)^2 \tr(\bb) \tr(\cb)^2\tr(\db)-18 \tr(\bb)\tr(\cb)^2 \tr(\db)\\
   &+&18\tr(\ab\bb) \tr(\ab)\tr(\db)-18 \tr(\ab)^2\tr(\bb) \tr(\db)+36 \tr(\bb)\tr(\db)\\
   &-&18 \tr(\bb\cb)\tr(\ab)^2 \tr(\cb)\tr(\db)+18 \tr(\bb\cb)\tr(\cb) \tr(\db)\\
   &+&18\tr(\ab\bb\cb) \tr(\ab)\tr(\cb) \tr(\db),
\end{eqnarray*}
 
\begin{eqnarray*} 
f_{10}&=&-18 \tr(\cb\db)\tr(\ab\bb)^2+18 \tr(\cb)\tr(\db) \tr(\ab\bb)^2\\
      &+&18\tr(\ab\db) \tr(\bb\cb)\tr(\ab\bb)+18\tr(\ab\cb) \tr(\bb\db)\tr(\ab\bb)\\
      &+&18\tr(\cb\db) \tr(\ab)\tr(\bb) \tr(\ab\bb)-18\tr(\ab\bb\db) \tr(\cb)\tr(\ab\bb)\\
      &-&18\tr(\ab\bb\cb) \tr(\db)\tr(\ab\bb)-18 \tr(\ab)\tr(\bb) \tr(\cb) \tr(\db)\tr(\ab\bb)\\
      &-&18\tr(\cb\db) \tr(\ab)^2-18\tr(\cb\db) \tr(\bb)^2+36\tr(\ab\cb)\tr(\ab\db)\\
      &+&36\tr(\bb\cb)\tr(\bb\db)+72\tr(\cb\db)+36\tr(\ab\bb\cb)\tr(\ab\bb\db)\\
      &-&18\tr(\bb\db)\tr(\ab\bb\cb)\tr(\ab)-18 \tr(\bb\cb)\tr(\ab\bb\db)\tr(\ab)\\
      &-&18 \tr(\ab\db)\tr(\ab\bb\cb)\tr(\bb)-18 \tr(\ab\cb)\tr(\ab\bb\db)\tr(\bb)\\
      &-&18 \tr(\ab\db)\tr(\ab) \tr(\cb)-18\tr(\bb\db) \tr(\bb)\tr(\cb)\\
      &+&18\tr(\ab\bb\db) \tr(\ab)\tr(\bb) \tr(\cb)-18\tr(\ab\cb) \tr(\ab)\tr(\db)\\
      &-&18 \tr(\bb\cb)\tr(\bb) \tr(\db)+18\tr(\ab\bb\cb) \tr(\ab)\tr(\bb) \tr(\db)\\
      &+&18\tr(\ab)^2 \tr(\cb)\tr(\db)+18 \tr(\bb)^2\tr(\cb) \tr(\db)\\
      &-&36 \tr(\cb)\tr(\db),
\end{eqnarray*}
 
\begin{eqnarray*}
f_{11}&=&-18 \tr(\ab\db)\tr(\bb\cb)^2+18 \tr(\ab)\tr(\db) \tr(\bb\cb)^2\\
      &+&18\tr(\ab\cb) \tr(\bb\db)\tr(\bb\cb)+18\tr(\ab\bb) \tr(\cb\db)\tr(\bb\cb)\\
      &-&18\tr(\bb\cb\db) \tr(\ab)\tr(\bb\cb)+18\tr(\ab\db) \tr(\bb)\tr(\cb) \tr(\bb\cb)\\
      &-&18\tr(\ab\bb\cb) \tr(\db)\tr(\bb\cb)-18 \tr(\ab)\tr(\bb) \tr(\cb) \tr(\db)\tr(\bb\cb)\\
      &-&18\tr(\ab\db) \tr(\bb)^2-18\tr(\ab\db) \tr(\cb)^2+72\tr(\ab\db)\\
      &+&36\tr(\ab\bb)\tr(\bb\db)+36\tr(\ab\cb)\tr(\cb\db)+36\tr(\ab\bb\cb)\tr(\bb\cb\db)\\
      &-&18\tr(\cb\db)\tr(\ab\bb\cb)\tr(\bb)-18 \tr(\ab\cb)\tr(\bb\cb\db)\tr(\bb)\\
      &-&18 \tr(\bb\db)\tr(\ab) \tr(\bb)-18\tr(\bb\db)\tr(\ab\bb\cb)\tr(\cb)\\
      &-&18 \tr(\ab\bb)\tr(\bb\cb\db)\tr(\cb)-18 \tr(\cb\db)\tr(\ab) \tr(\cb)\\
      &+&18\tr(\bb\cb\db) \tr(\ab)\tr(\bb) \tr(\cb)+18 \tr(\ab)\tr(\bb)^2 \tr(\db)\\
      &+&18\tr(\ab) \tr(\cb)^2\tr(\db)-36 \tr(\ab)\tr(\db)-18 \tr(\ab\bb)\tr(\bb) \tr(\db)\\
      &-&18\tr(\ab\cb) \tr(\cb)\tr(\db)+18\tr(\ab\bb\cb) \tr(\bb)\tr(\cb) \tr(\db),
\end{eqnarray*}

\begin{eqnarray*}
f_{12}&=&-18 \tr(\bb\cb)\tr(\ab\db)^2+18 \tr(\bb)\tr(\cb) \tr(\ab\db)^2\\
      &+&18\tr(\ab\cb) \tr(\bb\db)\tr(\ab\db)+18\tr(\ab\bb) \tr(\cb\db)\tr(\ab\db)\\
      &-&18\tr(\ab\cb\db) \tr(\bb)\tr(\ab\db)-18\tr(\ab\bb\db) \tr(\cb)\tr(\ab\db)\\
      &+&18\tr(\bb\cb) \tr(\ab)\tr(\db) \tr(\ab\db)-18\tr(\bb\cb) \tr(\ab)^2\\
      &-&18\tr(\ab) \tr(\bb) \tr(\cb)\tr(\db) \tr(\ab\db)-18\tr(\bb\cb) \tr(\db)^2\\
      &+&18\tr(\bb) \tr(\cb)\tr(\db)^2+36 \tr(\ab\bb)\tr(\ab\cb)+72\tr(\bb\cb)\\
      &+&36\tr(\bb\db)\tr(\cb\db)+36\tr(\ab\bb\db)\tr(\ab\cb\db)\\
      &-&18\tr(\cb\db)\tr(\ab\bb\db)\tr(\ab)-18 \tr(\bb\db)\tr(\ab\cb\db)\tr(\ab)\\
      &-&18 \tr(\ab\cb)\tr(\ab) \tr(\bb)-18\tr(\ab\bb) \tr(\ab)\tr(\cb)\\
      &+&18 \tr(\ab)^2\tr(\bb) \tr(\cb)-36 \tr(\bb)\tr(\cb)\\
      &-&18 \tr(\ab\cb)\tr(\ab\bb\db)\tr(\db)-18 \tr(\ab\bb)\tr(\ab\cb\db)\tr(\db)\\
      &-&18 \tr(\cb\db)\tr(\bb) \tr(\db)+18\tr(\ab\cb\db) \tr(\ab)\tr(\bb) \tr(\db)\\
      &-&18\tr(\bb\db) \tr(\cb)\tr(\db)+18\tr(\ab\bb\db) \tr(\ab)\tr(\cb) \tr(\db),
\end{eqnarray*}
 
\begin{eqnarray*}
f_{13}&=&18 \tr(\ab\cb)\tr(\bb\db)^2-18\tr(\ab\db) \tr(\bb\cb)\tr(\bb\db)\\
      &-&18\tr(\ab\bb) \tr(\cb\db)\tr(\bb\db)-18\tr(\bb\cb\db) \tr(\ab)\tr(\bb\db)\\
      &+&18\tr(\cb\db) \tr(\ab)\tr(\bb) \tr(\bb\db)-18\tr(\ab\bb\db) \tr(\cb)\tr(\bb\db)\\
      &+&18\tr(\ab\db) \tr(\bb)\tr(\cb) \tr(\bb\db)+18\tr(\bb\cb) \tr(\ab)\tr(\db) \tr(\bb\db)\\
      &-&18\tr(\ab\cb) \tr(\bb)\tr(\db) \tr(\bb\db)+18\tr(\ab\bb) \tr(\cb)\tr(\db) \tr(\bb\db)\\
      &-&18\tr(\ab) \tr(\bb) \tr(\cb)\tr(\db) \tr(\bb\db)+18\tr(\ab\cb) \tr(\bb)^2\\
      &+&18\tr(\ab\db) \tr(\cb\db)\tr(\bb)^2+18 \tr(\ab\cb)\tr(\db)^2\\
      &+&18 \tr(\ab\bb)\tr(\bb\cb) \tr(\db)^2-18\tr(\bb\cb) \tr(\ab)\tr(\bb) \tr(\db)^2\\
      &+&18\tr(\ab) \tr(\bb)^2 \tr(\cb)\tr(\db)^2-18 \tr(\ab)\tr(\cb) \tr(\db)^2\\
      &-&18\tr(\ab\bb) \tr(\bb)\tr(\cb) \tr(\db)^2-72\tr(\ab\cb)-36\tr(\ab\bb)\tr(\bb\cb)\\
      &-&36\tr(\ab\db)\tr(\cb\db)+36\tr(\ab\bb\db)\tr(\bb\cb\db)\\
      &-&18\tr(\cb\db)\tr(\ab\bb\db)\tr(\bb)-18 \tr(\ab\db)\tr(\bb\cb\db)\tr(\bb)\\
      &+&18 \tr(\bb\cb)\tr(\ab) \tr(\bb)-18 \tr(\ab)\tr(\bb)^2 \tr(\cb)\\
      &+&36\tr(\ab) \tr(\cb)+18\tr(\ab\bb) \tr(\bb)\tr(\cb)\\
      &-&18 \tr(\cb\db)\tr(\ab) \tr(\bb)^2\tr(\db)-18 \tr(\bb\cb)\tr(\ab\bb\db)\tr(\db)\\
      &-&18 \tr(\ab\bb)\tr(\bb\cb\db)\tr(\db)+18 \tr(\cb\db)\tr(\ab) \tr(\db)\\
      &+&18\tr(\ab\db) \tr(\bb\cb)\tr(\bb) \tr(\db)+18\tr(\ab\bb) \tr(\cb\db)\tr(\bb) \tr(\db)\\
      &+&18\tr(\bb\cb\db) \tr(\ab)\tr(\bb) \tr(\db)-18\tr(\ab\db) \tr(\bb)^2\tr(\cb) \tr(\db)\\
      &+&18\tr(\ab\db) \tr(\cb)\tr(\db)+18\tr(\ab\bb\db) \tr(\bb)\tr(\cb) \tr(\db),
\end{eqnarray*}
 
\begin{eqnarray*}
f_{14}&=&-18 \tr(\ab\bb)\tr(\cb\db)^2+18 \tr(\ab)\tr(\bb) \tr(\cb\db)^2\\
      &+&18\tr(\ab\db) \tr(\bb\cb)\tr(\cb\db)+18\tr(\ab\cb) \tr(\bb\db)\tr(\cb\db)\\
      &-&18\tr(\bb\cb\db) \tr(\ab)\tr(\cb\db)-18\tr(\ab\cb\db) \tr(\bb)\tr(\cb\db)\\
      &+&18\tr(\ab\bb) \tr(\cb)\tr(\db) \tr(\cb\db)-18\tr(\ab\bb) \tr(\cb)^2\\
      &-&18\tr(\ab) \tr(\bb) \tr(\cb)\tr(\db) \tr(\cb\db)+18\tr(\ab) \tr(\bb)\tr(\cb)^2\\
      &-&18 \tr(\ab\bb)\tr(\db)^2+18 \tr(\ab)\tr(\bb) \tr(\db)^2+72\tr(\ab\bb)\\
      &+&36\tr(\ab\cb)\tr(\bb\cb)+36\tr(\ab\db)\tr(\bb\db)+36\tr(\ab\cb\db)\tr(\bb\cb\db)\\
      &-&36\tr(\ab) \tr(\bb)-18\tr(\bb\db)\tr(\ab\cb\db)\tr(\cb)\\
      &-&18 \tr(\ab\db)\tr(\bb\cb\db)\tr(\cb)-18 \tr(\bb\cb)\tr(\ab) \tr(\cb)\\
      &-&18\tr(\ab\cb) \tr(\bb)\tr(\cb)-18 \tr(\bb\cb)\tr(\ab\cb\db)\tr(\db)\\
      &-&18 \tr(\ab\cb)\tr(\bb\cb\db)\tr(\db)-18 \tr(\bb\db)\tr(\ab) \tr(\db)\\
      &-&18\tr(\ab\db) \tr(\bb)\tr(\db)+18\tr(\bb\cb\db) \tr(\ab)\tr(\cb) \tr(\db)\\
      &+&18\tr(\ab\cb\db) \tr(\bb)\tr(\cb) \tr(\db).
\end{eqnarray*}
}

\newcommand{\etalchar}[1]{$^{#1}$}
\def\cdprime{$''$} \def\cdprime{$''$} \def\cprime{$'$} \def\cprime{$'$}
  \def\cprime{$'$} \def\cprime{$'$}

\end{document}